\newtheorem{theorem}{Theorem}
\newtheorem{lemma}[theorem]{Lemma}
\newtheorem{proposition}[theorem]{Proposition}
\newtheorem{corollary}[theorem]{Corollary}
\newtheorem{remark}[theorem]{Remark}
\newtheorem{definition}[theorem]{Definition}
\newtheorem{example}[theorem]{Example}
\newtheorem*{claim*}{Claim}
\newfont\bbf{msbm10 at 12pt}
\def\eps{\varepsilon}
\def\N{{\mathbb N}}
\def\A{{\mathcal A}}
\def\J{{\mathcal J}}
\def\G{{\mathcal G}}
\def\B{{\mathcal B}}
\def\Q{{\mathbb Q}}
\def\A{{\mathcal A}}
\def\B{{\mathcal B}}
\def\Fix{{\hbox{{\rm Fix}}}}
\def\d{{\hbox{{\rm d}}}}
\def\id{{\hbox{{\rm id}}}}
\def\supp{\mbox{\rm supp}}
\def\card{\mbox{\rm card} }
\def\orb{\mbox{\rm orb}}
\def\le{\leqslant}
\def\ge{\geqslant}
\def\1{ {\hbox{{\it 1}} \!\! I} }
\begin{document}

\title[Typical properties of interval maps]
{Typical properties of interval maps  \\ preserving the Lebesgue measure}
\author{Jozef Bobok}

\author{Serge Troubetzkoy}

\address{Department of Mathematics of FCE\\Czech Technical University in Prague\\
Th\'akurova 7, 166 29 Prague 6, Czech Republic}
\email{jozef.bobok@cvut.cz}

\address{Aix Marseille Univ, CNRS, Centrale Marseille, I2M, Marseille, France\linebreak
postal address: I2M, Luminy, Case 907, F-13288 Marseille Cedex 9, France}
\email{serge.troubetzkoy@univ-amu.fr}
\urladdr{www.i2m.univ-amu.fr/perso/serge.troubetzkoy/} \date{}

\thanks{We thank the A*MIDEX project (ANR-11-IDEX-0001-02), funded itself by the ``Investissements d'avenir'' program of the French Government, managed by the
French National Research Agency
(ANR).
 The first  author was supported by the European Regional Development Fund, project No.~CZ 02.1.01/0.0/0.0/16\_019/0000778.
 We thank two anonymous referees as well as Pierre-Antoine  Guiheneuf for suggestions which lead to numerous improvement to our exposition.}

\subjclass[2000]{37E05, 37B40, 46B25, 46.3}
\keywords{weakly mixing, leo, entropy}

\begin{abstract} We consider the class of the continuous functions from $[0,1]$ into itself which preserve the Lebesgue measure. This class endowed
with  the uniform metric constitutes a complete metric space. We investigate the dynamical properties of typical maps from the space.
\end{abstract}

\maketitle
\section{Introduction and summary of  results}
This article is about typical properties of continuous maps of the interval which preserve the Lebesgue measure.  Throughout the article the word
typical will  mean that  with respect to the uniform topology there is a dense $G_\delta$ set of maps having this property.  Such results
are in the domain of approximation theory. To our knowledge,
the use of approximation techniques in dynamical systems was
started in 1941 by Oxtoby and Ulam  who considered a
simplicial polyhedron with a non-atomic measure which
is positive on open sets. In this setting they showed that the set of ergodic measure-preserving homeomorphisms
is typical in the strong topology  \cite{OxUl41}.
In 1944 Halmos  introduced approximation techniques in a
purely metric situation: the study of invertible mode 0 maps of the interval $[0,1]$ which preserve the Lebesgue measure. He showed
 that the typical invertible map is weakly mixing, i.e., has continuous spectrum \cite{Ha44},\cite{Ha44.1},\cite{Ha56}.
In 1948  Rohlin  showed that the set of (strongly) mixing measure preserving invertible maps is of the  first category \cite{Ro48}.
 In  1967 Katok and Stepin \cite{KaSt67}  introduced the notation of speed of approximation. One of the notable applications of their method is
 the typicality of ergodicity and weak mixing for certain
classes of interval exchange transformations. Katok has shown that interval exchange transformations are never mixing \cite{Ka80}.

The study of typical properties of homeomorphism of compact manifolds  which preserve regular measures was continued and generalized
by Katok and Stepin in 1970 \cite{KaSt70}
who showed the typicality of ergodicity, of simple continuous spectrum, and the absence of mixing.
In 1980 Yano showed that the generic homeomorphism of a compact manifold has infinite entropy \cite{Ya}. 
More recently Alpern and co-authors have unified the studies of homeomorphisms and of measure preserving transformations and shown that
if a property is typical for measure preserving transformations then it is typical for homeomorphisms \cite{Al78}, \cite{Al79}, \cite{AlPr00}.
Recently many authors have shown that that the
shadowing property is generic in various setting: \cite{Br}, \cite{GuLe18}, \cite{KoMaOpKu}, \cite{Ko},\cite{Pi}.

Except for Yano's result which also holds for generic continuous maps of compact manifolds, all  of these results are about invertible maps. The measure
theoretic properties of $C^0$-generic systems on compact manifolds have
first been studied by Abdenur and Andersson \cite{AA}; their main result is that a $C^0$-generic map has no physical measure,
however the Birkhoff average of any continuous function is convergent for Lebesgue a.e.\ point.
Catsigeras and Troubetzkoy  gave a quite detailed description of the invariant measures
of $C^0$ generic continuous maps of compact manifolds with or without boundary (as well as for  generic homeomorphisms in the same setting) \cite{CT1},
\cite{CT2}, \cite{CT3}.

Many more more details of the history of approximation theory can be found in the surveys \cite{ChPr}, \cite{BeKwMe}.

 In this article we consider the set $C(\lambda)$ if continuous non-invertible maps of the unit interval $[0,1]$
which preserve the Lebsegue
measure $\lambda$.  Every such map has a dense set of periodic points.  Furthermore, except for the two exceptional maps $id$ and $1-id$, every
such map has positive metric entropy.
The $C(\lambda)$-typical function  (all the properties will be defined later in the article):
\begin{itemize}
\item[(i)] is weakly mixing with respect to $\lambda$ (Theorem \ref{t:5}),
\item[(ii)] is leo (Theorem \ref{t:4}),
\item[(iii)] satisfies the periodic specification property (Corollary \ref{blokh})
\item[(iv)] has a knot point at $\lambda$ almost every point \cite{Bo91},
\item[(v)] maps a set of Lebesgue measure zero onto $[0,1]$ (Corollary \ref{cor:onto}),
\item[(vi)] has infinite topological entropy (Proposition \ref{prop:inf}),
\item[(vii)] has Hausdorff dimension = lower Box dimension = 1 $<$ upper Box dimension  = 2 \cite{ScWi95}.
\end{itemize}
Furthermore, in analogy to Rohlin's result, we show that the set of mixing maps in $C(\lambda)$ is dense (Corollary \ref{p:4}) and of the first category (Theorem
\ref{t:8}). We
also show that for
any $c > 0$ as well as for $c =
\infty$
the set of
maps having metric entropy $c$ is dense in $C(\lambda)$,
however, we do not know if there is a value $c$ such that this set is generic.

Points  i) and vi) and the furthermore results are analogous to results in the invertible case, and the proofs of these results
follow the same general plan as in the invertible case. The other points have no analogies in the invertible case

Let $\mu$ be a probability Borel measure with full support, let  $C(\mu)$ be the set of all continuous interval maps
preserving the measure $\mu$ equipped with the
uniform metric. The map $h\colon~[0,1]\to [0,1]$ defined as $h(x)=\mu([0,x])$ for  $x\in [0,1]$, is a homeomorphism of $[0,1]$.  In fact, in this settings
$\lambda=h^*\mu$, i.e., $\lambda$ is the
pushforward measure of  $\mu$ by $h$. Moreover, the map $H\colon~C(\lambda)\to C(\mu)$ given by $f\longmapsto h^{-1}\circ f \circ h$ is a homermoprhism of the
spaces $C(\lambda)$ and
$C(\mu)$. Using $H$, we can transfer the properties (i)-(vi) listed above into the context of $C(\mu)$. In particular, the property (iv) in $C(\mu)$ says that
$C(\mu)$-typical function has a
knot point at $\mu$ almost every point \cite{Bo91}.

\section{Maps in $C(\lambda)$}
Let  $\lambda$ denote the Lebesgue measure on $[0,1]$ and $\B$ the Borel sets in $[0,1]$. Let $C(\lambda)$ consist of all continuous $\lambda$-preserving
functions from $[0,1]$ onto $[0,1]$,
i.e.,
$$ C(\lambda)=\{f\colon~[0,1]\to [0,1]\colon~\forall
A\in\B,~\lambda(A)=\lambda(f^{-1}(A))\}. $$
We consider the uniform metric $\rho$ on $C(\lambda)$:
$\rho (f,g) := \sup_{x \in [0,1]} |f(x) - g(x)|$.

\begin{proposition}\label{p:1}$(C(\lambda),\rho)$  is a complete metric space. \end{proposition}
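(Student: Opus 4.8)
The plan is to show that $C(\lambda)$ is a closed subset of the complete metric space $(C([0,1],[0,1]),\rho)$ of all continuous self-maps of the interval under the uniform metric; since a closed subspace of a complete metric space is complete, this suffices. That the ambient space $(C([0,1],[0,1]),\rho)$ is complete is the standard fact that a uniform limit of continuous functions is continuous, together with the observation that the set $[0,1]$ of allowed values is closed, so the limit still maps into $[0,1]$.

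The substance is therefore to prove that $C(\lambda)$ is closed in this ambient space. So I would take a sequence $(f_n)$ in $C(\lambda)$ with $\rho(f_n,f)\to 0$ for some $f\in C([0,1],[0,1])$, and show $f\in C(\lambda)$, i.e.\ that $f$ preserves $\lambda$. Rather than checking $\lambda(f^{-1}(A))=\lambda(A)$ for every Borel set $A$ directly, I would use the change-of-variables characterization of measure preservation: $f$ preserves $\lambda$ if and only if $\int_0^1 \varphi\circ f\,\d\lambda = \int_0^1 \varphi\,\d\lambda$ for every continuous $\varphi\colon[0,1]\to\RR$ (this is enough because the pushforward $f^*\lambda$ and $\lambda$ are Borel measures that agree on all continuous functions, hence coincide). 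Each $f_n\in C(\lambda)$ already satisfies $\int_0^1 \varphi\circ f_n\,\d\lambda=\int_0^1\varphi\,\d\lambda$.

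The key step is then to pass to the limit in $n$ on the left-hand side. Fix a continuous $\varphi$; it is uniformly continuous on the compact interval $[0,1]$, so given $\eps>0$ there is $\delta>0$ with $|\varphi(s)-\varphi(t)|<\eps$ whenever $|s-t|<\delta$. Once $\rho(f_n,f)<\delta$ we get $|\varphi(f_n(x))-\varphi(f(x))|<\eps$ for all $x$, hence $|\int_0^1\varphi\circ f_n\,\d\lambda-\int_0^1\varphi\circ f\,\d\lambda|\le\eps$. Letting $n\to\infty$ yields $\int_0^1\varphi\circ f\,\d\lambda=\int_0^1\varphi\,\d\lambda$, and since $\varphi$ was an arbitrary continuous function, $f$ preserves $\lambda$, so $f\in C(\lambda)$.

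I do not expect a serious obstacle here; the only point requiring a little care is the reduction from ``preserves $\lambda$ on all Borel sets'' to ``equal integrals against all continuous test functions,'' which rests on the fact that a finite Borel measure on $[0,1]$ is determined by its integrals against continuous functions (a consequence of the Riesz representation theorem, or of the regularity of Borel measures on a compact metric space allowing one to approximate indicator functions of open sets from below by continuous functions). With that standard reduction in hand, the uniform continuity argument makes the limit interchange immediate, and completeness of $C(\lambda)$ follows.
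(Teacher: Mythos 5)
The paper leaves this proposition to the reader as a ``standard proof,'' and your argument---realizing $C(\lambda)$ as a closed subset of the complete space $(C([0,1],[0,1]),\rho)$ and verifying closedness by testing the pushforward against continuous functions, where uniform convergence of $f_n$ to $f$ plus uniform continuity of the test function $\varphi$ gives $\int_0^1 \varphi\circ f_n\,\d\lambda \to \int_0^1 \varphi\circ f\,\d\lambda$---is precisely that standard argument, and it is correct. One sentence worth adding: since the paper's definition asks for maps \emph{onto} $[0,1]$, note that surjectivity of the limit $f$ is automatic, because $f([0,1])$ is compact and if it omitted a nonempty open set $U$ then $\lambda(U)>0$ while $\lambda(f^{-1}(U))=\lambda(\emptyset)=0$, contradicting measure preservation.
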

We leave the standard proof of this result to the reader.
\begin{definition}We say that continuous maps $f,g\colon~[a,b]\subset [0,1]\to [0,1]$ are $\lambda$-equivalent if for each Borel set $A\in\B$,
$$\lambda(f^{-1}(A))=\lambda(g^{-1}(A)).
$$
For $f\in C(\lambda)$ and $[a,b]\subset [0,1]$ we denote by $C(f;[a,b])$ the set of all continuous maps $\lambda$-equivalent to $f\upharpoonright [a,b]$. We
define
$$C_*(f;[a,b]):=\{h\in C(f;[a,b])\colon~h(a)=f(a),~h(b)=f(b)\}.$$
\end{definition}

\begin{definition}\label{e:1}Let $f$ be from $C(\lambda)$ and $[a,b]\subset [0,1]$. For any fixed $m\in\N$, let us define the map $h=h\langle
f;[a,b],m\rangle\colon~[a,b]\to [0,1]$ by
$j\in\{0,\dots,m-1\}$ and
 \begin{equation}
h(a + x) := \begin{cases}
f\left (a+m \Big (x-\frac{j(b-a)}{m}\Big) \right )\text{ if } x\in \left [\frac{j(b-a)}{m},\frac{(j+1)(b-a)}{m} \right ],~j\text{ even}, \\
f\left (a+m \Big (\frac{(j+1)(b-a)}{m}-x \Big ) \right )\text{ if } x\in \left [\frac{j(b-a)}{m},\frac{(j+1)(b-a)}{m} \right ],~j\text{ odd}.
\end{cases}
\end{equation}
Then $h\langle f;[a,b],m\rangle\in C(f;[a,b])$ for each $m$ and $h\langle f;[a,b],m\rangle\in C_*(f;[a,b])$ for each $m$ odd .
\end{definition}

\begin{example}
In Figure \ref{fig:dendritemap1}, for  $f\in C(\lambda)$ shown on the left, on the right
we show the regular $3$-fold window perturbation of $f$ by $h=h\langle f;[a,b],3\rangle\in C_*(f;[a,b])$.
\end{example}

\begin{figure}[t]
\centering
\includegraphics[width=1\textwidth]{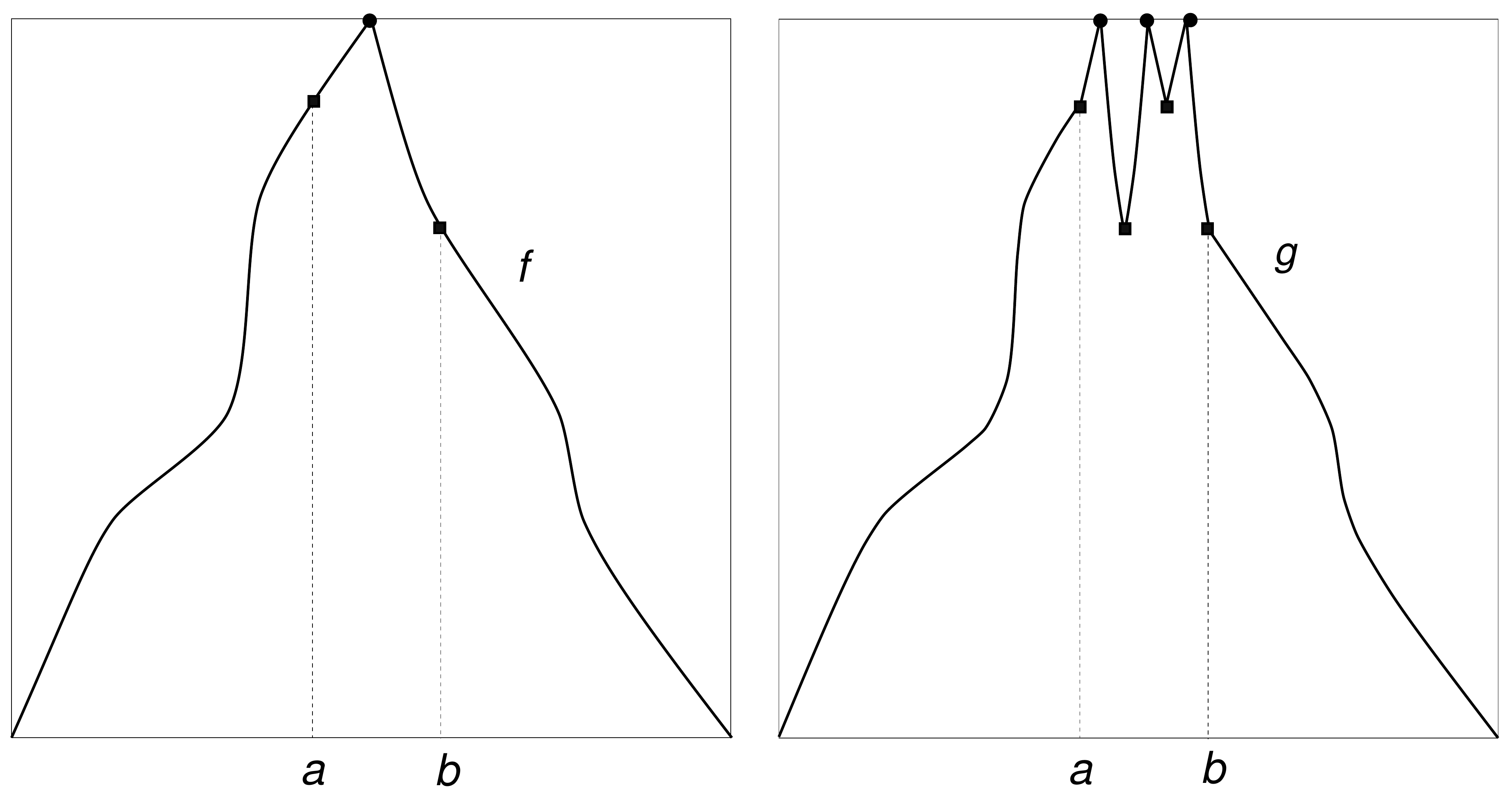}
\caption{The map $g$ is  $3$-fold regular window perburbation of the map $f$.}\label{fig:dendritemap1}
\end{figure}

For a fixed $h\in C_*(f;[a,b])$, the map $g=g\langle f,h\rangle\in C(\lambda)$ defined by
\begin{equation}\label{e:33}
g(x) := \begin{cases}
f(x)\text{ if } x\notin [a,b],\\
h(x)\text{ if } x\in [a,b]
\end{cases}
\end{equation}
will be called the \textit{window perturbation} of $f$ (by $h$ on $[a,b]$). In particular, if $h=h\langle[f;[a,b],m\rangle$, $m$ odd, resp. $h$ is piecewise
affine, we will speak of
\textit{regular $m$-fold, resp. piecewise affine window perturbation} $g$ of $f$ (on $[a,b]$) - see Figure \ref{fig:dendritemap1}.

The following useful observation will be repeatedly used in our text. We will omit its proof since
it is a  straightforward consequence of the uniform continuity of $f$.

\begin{lemma}\label{l:2}Let $f$ be from $C(\lambda)$. For each $\eps>0$ there is $\delta>0$ such that
\begin{equation}\forall~[a,b]\subset [0,1],~b-a<\delta~\forall~h\in C_*(f;[a,b])\colon~\rho(f,g\langle f,h\rangle)<\eps.\end{equation}
In particular, for each $\eps>0$ there is a positive integer $n_0>0$ such that for each $n>n_0$, if $I_j=[\frac{j}{n},\frac{(j+1)}{n}]$ and
$$g\upharpoonright I_j=h\langle f;I_j,m(j)\rangle$$ with odd numbers $m(j)$ for every
$j\in\{0,\dots,n-1\}$, then $\rho(f,g)<\eps$ independently of the numbers $m(j)$.
\end{lemma}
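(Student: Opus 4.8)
The plan is to note first that the window perturbation $g\langle f,h\rangle$ agrees with $f$ off $[a,b]$, so that $\rho(f,g\langle f,h\rangle)=\sup_{x\in[a,b]}|f(x)-h(x)|$; the whole statement then reduces to controlling $h$ on the short interval $[a,b]$. Since $f$ is uniformly continuous, given $\eps>0$ I would first pick $\delta>0$ so that $|x-y|<\delta$ forces $|f(x)-f(y)|<\eps/2$. Then for every $[a,b]$ with $b-a<\delta$ the image $f([a,b])$, which is a closed interval because $f$ is continuous and $[a,b]$ is connected and compact, has $\diam f([a,b])\le\eps/2$.

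The key step, and the only point beyond uniform continuity, is the observation that every $h\in C_*(f;[a,b])$ satisfies $h([a,b])\subseteq f([a,b])$. To see this I would consider the set $U:=[0,1]\setminus f([a,b])$, which is open in $[0,1]$ and disjoint from $f([a,b])$, so that $(f\upharpoonright[a,b])^{-1}(U)=\es$. By $\lambda$-equivalence of $h$ and $f\upharpoonright[a,b]$ we then get $\lambda(h^{-1}(U))=\lambda((f\upharpoonright[a,b])^{-1}(U))=0$. Now the continuity of $h$ enters: $h^{-1}(U)$ is relatively open in $[a,b]$, and any nonempty relatively open subset of $[a,b]$ has positive Lebesgue measure, whence $h^{-1}(U)=\es$, i.e.\ $h([a,b])\subseteq f([a,b])$. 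This is the main (though mild) obstacle, since $\lambda$-equivalence by itself yields only a null preimage, and it is continuity that upgrades this to an empty preimage.

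Combining the two steps, for every $x\in[a,b]$ both $f(x)$ and $h(x)$ lie in the interval $f([a,b])$, whence $|f(x)-h(x)|\le\diam f([a,b])\le\eps/2<\eps$, and therefore $\rho(f,g\langle f,h\rangle)<\eps$. This proves the first assertion.

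For the \emph{in particular} clause I would, given $\eps>0$, take $\delta$ as above and choose any integer $n_0>1/\delta$, so that for $n>n_0$ each $I_j=[\frac{j}{n},\frac{j+1}{n}]$ has length $1/n<\delta$. For odd $m(j)$ the map $h\langle f;I_j,m(j)\rangle$ lies in $C_*(f;I_j)$ by Definition \ref{e:1} (and in fact its range is exactly $f(I_j)$, read off directly from the defining formula); thus $g$, being on each $I_j$ a window perturbation of $f$ by an element of $C_*(f;I_j)$, is a well-defined map in $C(\lambda)$ that matches $f$ at every point $\frac{j}{n}$. Applying the bound of the first part on each $I_j$ gives $\sup_{x\in I_j}|f(x)-g(x)|\le\diam f(I_j)\le\eps/2<\eps$, and taking the maximum over $j\in\{0,\dots,n-1\}$ yields $\rho(f,g)<\eps$, a bound depending only on $\delta$ and hence independent of the chosen odd numbers $m(j)$.
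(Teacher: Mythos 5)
Your proof is correct and is essentially the argument the paper intends: the authors omit the proof of Lemma \ref{l:2} entirely, calling it a straightforward consequence of the uniform continuity of $f$, and your write-up supplies exactly that argument. The one point going beyond uniform continuity --- that $\lambda$-equivalence together with continuity of $h$ forces $h([a,b])\subseteq f([a,b])$, because $h^{-1}\bigl([0,1]\setminus f([a,b])\bigr)$ is a relatively open set of Lebesgue measure zero and hence empty --- is correctly identified and correctly handled, so nothing is missing.
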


Below we introduce three classical types of mixing in  topological dynamics. We consider them in the context of $C(\lambda)$.

A map $f\in C(\lambda)$ is called
\begin{itemize}
\item \textit{transitive} if for each pair of nonempty open sets $U,V$, there is $n\ge 0$ such that $f^n(U)\cap V\neq\emptyset$,
\item \textit{topologicalyl mixing} if for each pair of nonempty open sets $U,V$, there is $n_0\geq0$ such that $f^n(U)\cap V\neq\emptyset$ for every $n\ge
    n_0$,
\item  \textit{leo} (\textit{locally eventually onto}) if for every nonempty open set $U$ there is $n\in{\mathbb N}$ such that $f^n(U)=[0,1]$.
\end{itemize}

For each map $f\in C(\lambda)$ the set of periodic points is dense in $[0,1]$, this is a consequence of the Poincar\'e Recurrence Theorem and the fact that in
dynamical system given by an
interval map the closures of recurrent points and periodic points coincide \cite{CoHe80}. Thus Proposition \ref{p:5} and Lemma \ref{l:1},
stated below, apply to elements of $C(\lambda)$.

               For two intervals $J_1,J_2$ with pairwise disjoint non-empty interiors we write $J_1<J_2$ if $x_1<x_2$ for some points $x_1\in J_1$ and $x_2\in
               J_2$. Throughout the article we
               will denote the interior of an interval $J$ by the notation $J^{\circ}$.

\begin{figure}[htb!!]
{\hspace*{-1.2cm}}\includegraphics[width=1.1\textwidth]{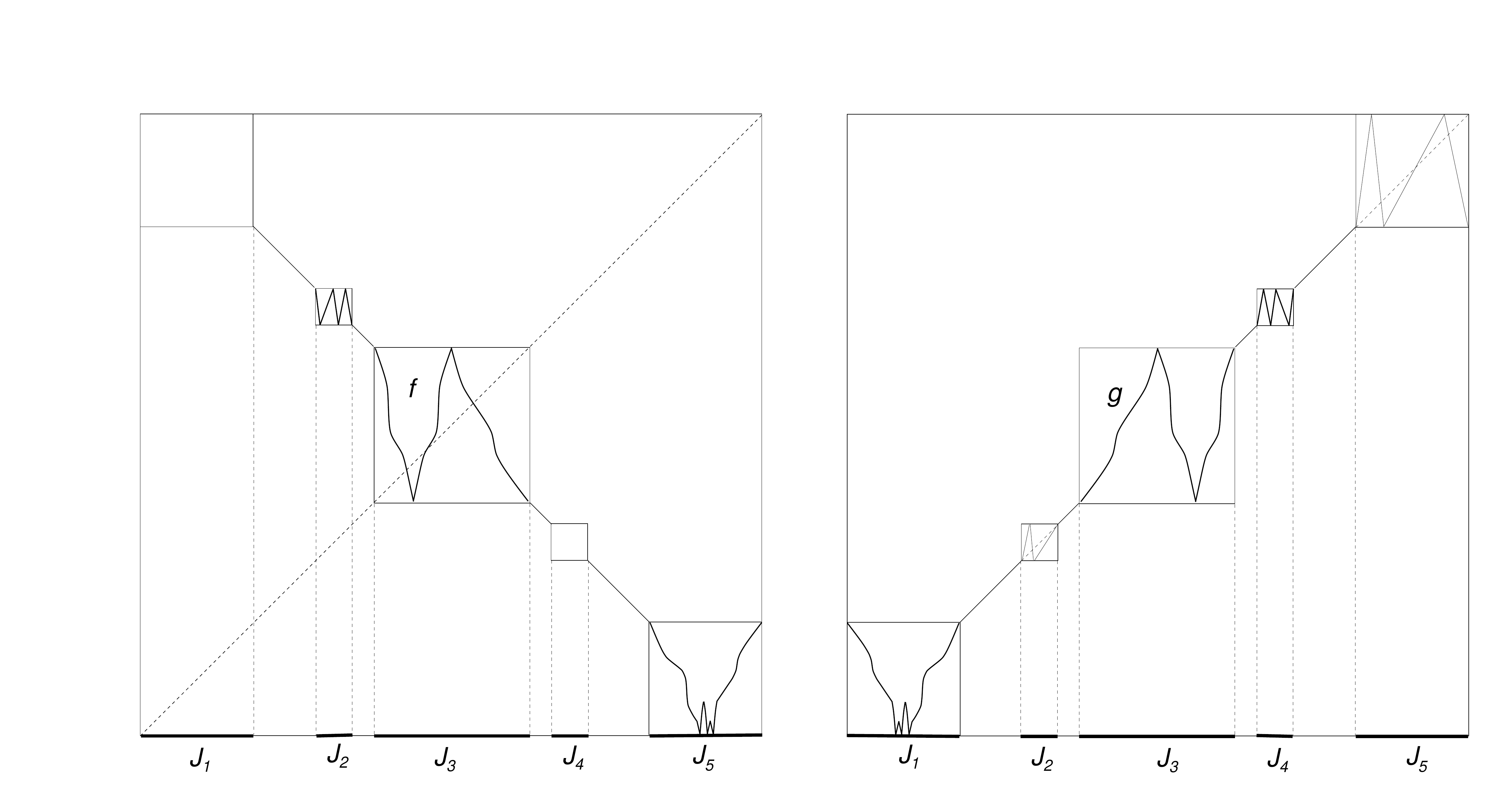}
\caption{$f,g\in C(\lambda)$; Prop. \ref{p:5}(\ref{e:28}): $\J(f)=\{J_i\}_{i=1}^5$, $f(J_1)=J_5$, $f(J_2)=J_4$, $f(J_3)=J_3$, $f(J_4)=J_2$, $f(J_5)=J_1$; Prop.
\ref{p:5}(\ref{e:18}):
$\J(g)=\{J_i\}_{i=1}^5$, $g(J_i)=J_i$ for each $i$. }\label{fig:dendritemap2}
\end{figure}

\begin{proposition}\label{p:5}\cite{BaMa85}~Suppose $f$ has a dense set of periodic points. The following  assertions hold (Figure \ref{fig:dendritemap2}).
\begin{itemize}
\item[(i)] There is a collection (perhaps finite or empty) $\J= \J(f) =\{J_1,J_2,\dots\}$ of
closed subintervals of $[0,1]$ with mutually disjoint interiors, such that for each $i$,  $f^2(J_i) = J_i$, and
there is a point $x_i\in J_i$ such that $\{f^{4n}(x_i)\colon~n\ge 0\}$ is dense in $J_i$.
\item[(ii)] If $\#\J\ge 2$ then either
\begin{equation}\label{e:18}\forall~J_1,J_2\in\J\colon~J_1<J_2\implies f(J_1)<f(J_2)\end{equation}
or
\begin{equation}\label{e:28}\forall~J_1,J_2\in\J\colon~J_1<J_2\implies f(J_1)>f(J_2).\end{equation}
\item[(iii)] For each $J\in\J$, $f(J)\in\J$ and $f^{-1}(f(J))=J$.
\item[(iv)] If (\ref{e:18}) is true then $f(J)=J$ for each $J\in\J$.
\item[(v)] If (\ref{e:28}) holds true $f(J)=J$ if and only if $J^{\circ}\cap \Fix(f)\neq\emptyset$ and there is at most one such interval.
\item[(vi)]If $x\in (0,1)\setminus \bigcup_{i\ge 1}J_i^{\circ}$, then $f^2(x)=x$.
 \item[(vii)] For each $J\in\J$, the map $f^2\upharpoonright J$ is topologically mixing.
   \item[(viii)] For each $J\in\J$, if $f(J)=J$ then the map $f\upharpoonright J$ is topologically mixing.
  \item[(ix)] The map $f$ is surjective.
\end{itemize}
 \end{proposition}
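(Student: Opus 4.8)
The plan is to isolate the genuinely hard content --- the existence of the decomposition together with the fact that its complement consists only of period-$2$ points --- and to deduce everything else from it using two standard inputs: that a map with dense periodic points is surjective, and the classical dichotomy that a transitive interval map is either topologically mixing or decomposes into two subintervals interchanged by the map, with the second iterate mixing on each. Surjectivity (ix) is then immediate, since every periodic point lies in the closed set $f([0,1])$, whence $f([0,1])\supseteq\overline{\Per(f)}=[0,1]$.

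For the decomposition I would call a nondegenerate closed interval $K$ \emph{transitive} if for some minimal $p\ge 1$ the intervals $K,f(K),\dots,f^{p-1}(K)$ have pairwise disjoint interiors, $f^p(K)=K$, and $f^p\upharpoonright K$ has a dense orbit. A short argument shows that two transitive intervals are either nested or have disjoint interiors, so the maximal ones form a family $\J$ with pairwise disjoint interiors, and $f$ permutes this family because the continuous image of an interval is a transitive interval, which maximality forces to be maximal again. The heart of the proof --- and the step I expect to be hardest --- is the pair of facts that this family is \emph{large} and \emph{order-compatible}. Largeness is exactly the covering statement (vi): each $x\in(0,1)$ lying in no interior is fixed by $f^2$; here dense periodicity enters first to exclude wandering intervals (a wandering interval would contain an open set missing $\Per(f)$) and then to show that any non-periodic such $x$ would let one return a small neighbourhood into itself and build a transitive interval around $x$, a contradiction. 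Order-compatibility is the assertion that $f$ never crosses the linear order of $\J$: a crossing would let transitivity propagate across an intervening gap and enlarge a piece, again contradicting maximality. Establishing these two properties in tandem is the substance of \cite{BaMa85}.

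Granting this, the remaining items fall out by reading off the induced permutation $\sigma$ of the linearly ordered family $\J$. Order-compatibility makes $\sigma$ preserve betweenness, and a betweenness-preserving bijection of a linear order is either order-preserving or order-reversing, which is (ii). If it is order-preserving then $\sigma$ is the identity, giving $f(J)=J$ for every $J$ (iv); if it is order-reversing then $\sigma$ is an involution, so no cycle has length exceeding $2$, whence $f^2(J)=J$, and $f(J)=J$ holds exactly for an interval meeting $\Fix(f)$, of which at most one can straddle the unique order-reversal point (v). In either case $f^2(J)=J$, and the injectivity of $\sigma$ together with (vi) gives the preimage identity $f^{-1}(f(J))=J$ of (iii). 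Since no cycle has length exceeding $2$, the second iterate $f^2\upharpoonright J$ cannot split into interchanged halves, so by the dichotomy it is topologically mixing; thus $f^4=(f^2)^2$ is transitive on $J$, supplying the point $x_i$ with dense $f^4$-orbit required in (i) and proving (vii). Finally, when $f(J)=J$ the map $f\upharpoonright J$ is itself transitive, and the same dichotomy yields (viii), a splitting being ruled out because it would contradict the mixing of $f^2\upharpoonright J$ just established.
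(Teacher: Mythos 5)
Your plan for (ix) and for deducing (viii) from (vii) matches the paper, and deferring the covering statement (vi) and the order-compatibility to \cite{BaMa85} is legitimate (the paper itself quotes (i)--(vi) from there). The genuine gap is in the step where you extract (i) and (vii): the family of \emph{maximal} transitive intervals you construct is not the family the proposition is about, and the sentence you use to bridge this --- ``since no cycle has length exceeding $2$, the second iterate $f^2\upharpoonright J$ cannot split into interchanged halves'' --- does not hold up. The splitting produced by the dichotomy is \emph{internal} to $J$: it yields two proper subintervals of $J$, which are transitive intervals that simply fail to be maximal, so their existence contradicts neither the maximality of $J$ nor the bound on the cycle lengths of the induced permutation $\sigma$, which concerns only elements of $\mathcal J$. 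Concretely, let $f$ have dense periodic points and be transitive but not topologically mixing (such maps exist; this is exactly case (i) of Lemma \ref{l:1} and the situation pictured in Figure \ref{fig:dendritemap2}). Then $[0,1]$ itself is a maximal transitive interval with $p=1$, so your family is $\{[0,1]\}$; but $f$ interchanges two halves $[0,b]$, $[b,1]$, so $f^2$ leaves each half invariant, $f^2\upharpoonright[0,1]$ is not even transitive (in particular your appeal to the dichotomy for $f^2\upharpoonright J$ is vacuous, since that dichotomy presupposes transitivity), no point has a dense $f^4$-orbit, and both (i) and (vii) fail for your family. The proposition instead requires $\mathcal J=\{[0,b],[b,1]\}$ here.

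The repair is to take as $\mathcal J$ not the maximal transitive pieces but the \emph{mixing} pieces: whenever a maximal transitive piece $K$ with minimal period $p$ has $f^p\upharpoonright K$ transitive but not mixing, replace $K$ by the two halves furnished by the dichotomy, and then re-verify (ii)--(vi) for the refined family --- in particular that the refined pieces still satisfy $f^2(J_i)=J_i$, i.e.\ that a period-$4$ cycle of halves (which can a priori arise when $p=2$ and $f^2\upharpoonright K$ splits) is excluded; this again needs the order-compatibility argument applied to the refined pieces, not the original ones. For comparison, the paper avoids all of this: it quotes (i)--(vi) from \cite{BaMa85}, whose pieces already carry a dense $f^4$-orbit (equivalently, are mixing pieces for $f^2$), then gets (vii) from (i) via the theorem that $g$ is topologically mixing iff $g^2$ is transitive \cite[Theorem 46]{BlCo92}, gets (viii) from (vii) by the same theorem, and gets (ix) from density of periodic points together with closedness of $f([0,1])$. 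As written, your argument proves the proposition only for those maps whose maximal transitive pieces happen to be already mixing.
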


 \begin{proof} Properties (i)-(vi) had been proved in \cite{BaMa85}. The other ones easily follow. \newline
\noindent (vii) It is well known that an interval map $g\colon~J\to J$ is topologically mixing if and only if $g^2$ is transitive \cite[Theorem 46]{BlCo92}. By
(i), the set
$\{f^{4n}(x)\colon~n\ge 0\}$ is dense in $J$ for some $x\in J$, hence $f^4\upharpoonright J$ is transitive, i.e., $f^2\upharpoonright J$ is topologically
mixing.\newline
\noindent (viii) From (vii) we know that $f^2\upharpoonright J$ is topologically mixing hence also transitive and as in (vii) we can deduce that
$f\upharpoonright J\colon~J\to J$ is
topologically mixing.\newline
\noindent (ix) Since $f$ is continuous, the image $f([0,1])$ is a closed interval and by our assumption it contains a dense subset of $[0,1]$, so $f([0,1])=
[0,1]$.
 \end{proof}

\begin{lemma}\label{l:1} Suppose $f$ has a dense set of periodic points.
\begin{itemize}
\item[(i)] The map $f$ is transitive but not topologically mixing if and only if $\J=\{[0,b],[b,1]\}$ and $f([0,b])=[b,1]$.
\item[(ii)] The map $f$ is topologically mixing  if and only if $\J=\{[0,1]\}$.
 \item[(iii)] The map $f$ is leo if and only if $\J=\{[0,1]\}$ and both of the sets $f^{-2}(0)\cap (0,1)$ and $f^{-2}(1)\cap (0,1)$ are non-empty.
    \end{itemize}
    \end{lemma}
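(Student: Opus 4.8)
The plan is to deduce everything from Proposition \ref{p:5}, treating (i) and (ii) together through the combinatorics of how $f$ permutes the family $\J$, and then handling the more delicate statement (iii) separately.

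For (i) and (ii), I first note that by Proposition \ref{p:5}(iii) the map $f$ induces a permutation $\sigma$ of $\J$ (well defined since $f(J)\in\J$), and $f^2(J)=J$ forces $\sigma^2=\id$. The key step is: \emph{if $f$ is transitive and $J\in\J$, then $J\cup f(J)=[0,1]$.} Indeed $f(J\cup f(J))=f(J)\cup f^2(J)=J\cup f(J)$, so $K:=J\cup f(J)$ is forward invariant; if $K\neq[0,1]$ I pick nonempty open $U\subseteq J^\circ$ and nonempty open $V\subseteq[0,1]\setminus K$, and then $f^n(U)\subseteq K$ for every $n$, so $f^n(U)\cap V=\es$, contradicting transitivity. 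Consequently, for transitive $f$: if some $J\in\J$ satisfies $f(J)=J$ then $J=[0,1]$ and $\J=\{[0,1]\}$; otherwise every $J$ has $f(J)\neq J$, and then $J\cup f(J)=[0,1]$ exhibits $[0,1]$ as a union of two proper closed intervals with disjoint interiors, forcing $\J=\{[0,b],[b,1]\}$ with $f([0,b])=[b,1]$ (no third interval can fit). The empty case $\J=\es$ is excluded because then $f^2=\id$ by \ref{p:5}(vi), and an involution of $[0,1]$ is never transitive. This dichotomy, together with \ref{p:5}(viii) (which makes $f$ topologically mixing when $\J=\{[0,1]\}$, since then $f([0,1])=[0,1]$), yields (i) and (ii), once I check the swap case: mixing fails because for $U\subseteq(0,b)$ the set $f^n(U)$ lies in $[0,b]$ for even $n$ and in $[b,1]$ for odd $n$; transitivity holds because $f^2\upharpoonright[0,b]$ is topologically mixing by \ref{p:5}(vii), so forward orbits of open subintervals of $(0,b)$ are dense in $[0,b]$, and applying $f$ spreads them densely over $[b,1]=f([0,b])$ as well.

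For (iii), the implication leo $\Rightarrow\J=\{[0,1]\}$ is immediate from (ii) since leo implies topological mixing. Assuming conversely that $\J=\{[0,1]\}$ (so $f$ is mixing) together with the two preimage conditions, I show every nonempty open $U$ (which we may assume to be an interval, so each $f^n(U)$ is an interval) satisfies $f^n(U)=[0,1]$ for large $n$: fix interior points $q_0,q_1\in(0,1)$ with $f^2(q_0)=0$ and $f^2(q_1)=1$; since $\inf f^n(U)\to0$ and $\sup f^n(U)\to1$ by mixing, $f^n(U)$ eventually contains both $q_0$ and $q_1$, whence $f^{n+2}(U)=f^2(f^n(U))$ is an interval containing $0$ and $1$, i.e.\ equals $[0,1]$. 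It remains to prove leo $\Rightarrow f^{-2}(0)\cap(0,1)\neq\es$, and the symmetric statement for $1$; this is the heart of the matter.

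The main obstacle is exactly this last implication, and it also explains why the condition involves $f^{-2}$ rather than $f^{-1}$ (the tent map has $f^{-1}(0)=\{0,1\}$ yet is leo). I argue by contraposition: suppose $f^{-2}(0)\cap(0,1)=\es$, so $f^{-2}(0)\subseteq\{0,1\}$, and show the backward orbit $\bigcup_{n\ge0}f^{-n}(0)$ is contained in $\{0,1\}$, hence finite; it is then not dense, so some nonempty open $U$ misses it, giving $0\notin f^n(U)$ for all $n$ and contradicting leo. Surjectivity of $f$ (Proposition \ref{p:5}(ix)) is essential, through the elementary fact that \emph{a surjective continuous $f$ attains every value of $(0,1)$ at an interior point}: if $f^{-1}(v)\subseteq\{0,1\}$ with $v\in(0,1)$ then $f-v$ has constant sign on the connected set $(0,1)$, forcing $\min f>0$ or $\max f<1$, against surjectivity. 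Applying this to each $v\in f^{-1}(0)$, whose preimage lies in $f^{-2}(0)\subseteq\{0,1\}$, gives $f^{-1}(0)\subseteq\{0,1\}$; and if moreover $1\in f^{-1}(0)$ the same computation gives $f^{-1}(1)\subseteq\{0,1\}$, so $\{0,1\}$ is backward invariant and the whole backward orbit of $0$ stays in $\{0,1\}$. The delicate point this resolves is that a priori one might hope to reach the endpoint $0$ through the other endpoint $1$ (which can have many interior preimages), but surjectivity forbids the configuration that would make this possible. The argument for $1$ is identical after exchanging the roles of the two endpoints.
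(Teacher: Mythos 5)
Your proof is correct and follows essentially the same route as the paper's: for part (iii) it rests on the same two pillars — surjectivity forces every value in $(0,1)$ to have a preimage in $(0,1)$ (hence $f^{-1}(0)\subseteq\{0,1\}$, and $f^{-1}(1)\subseteq\{0,1\}$ in the case $f(1)=0$), plus the mixing argument that $f^n(U)$, being an interval creeping out to both endpoints, eventually contains interior $f^2$-preimages of $0$ and $1$ — and your contraposition via finiteness of the backward orbit of $0$ is just a repackaging of the paper's conclusion that $0\notin f^n((0,1])$ (resp.\ $0\notin f^{2n}((0,1])$ when $f(1)=0$). The only substantive difference is that you supply complete arguments for (i) and (ii) via the induced involution on $\J$ and the observation that transitivity forces $J\cup f(J)=[0,1]$, details the paper dismisses as following immediately from Proposition \ref{p:5}.
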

\begin{proof}

Parts (i) and (ii) follow immediately from Proposition \ref{p:5}, thus
we begin by the  only if direction of (iii).

By (ii), if  $\J \ne \{[0,1]\}$ then $f$ is not topologically mixing, and thus not leo, thus we suppose $\J = \{[0,1]\}$.
Suppose first that  $f^{-2}(0)\cap (0,1)=\emptyset$.
The leo map $f$ is continuous and surjective hence every point in $(0,1)$ must have at least one preimage in $(0,1)$. This fact and our assumption $f^{-2}(0)\cap
(0,1)=\emptyset$ imply
$f^{-1}(0)\subset \{0,1\}$.

 If $f(1)\neq 0$ then $f^{-1}(0)=\{0\}$ and $0\notin f^n((0,1])$ for every $n$ positive. If $f(1)=0$ then by our assumption, $f^{-1}(1)\cap (0,1)=\emptyset$
 hence $f(0)=1$. It implies that
 $f^2(0)=0$, $f^2(1)=1$ and $0\notin f^{2n}((0,1])$, what contradicts the leo property of $f$. The case when $f^{-2}(1)\cap (0,1)=\emptyset$ can be proven
 analogously.

We turn to the if direction. We assume that  $\J = \{[0,1]\}$ and that $f^{-2}(0)\cap (0,1)\neq\emptyset\neq f^{-2}(1)\cap (0,1)=\emptyset$; since by (ii) $f$ is
topologically mixing, for
every nonempty open $L\subset [0,1]$ there has to exist a positive $n$ for which $f^{n}(L)\cap f^{-2}(0)\neq\emptyset\neq f^{n}(L)\cap f^{-2}(1)$ hence
$f^{n+2}(L)=[0,1]$.
\end{proof}

For any set $X \subset C([0,1])$ we denote by $X_{property}$ the set of all maps in $X$ having a {\it property} (in lower index abbreviated) in question.
We denote by $PA(\lambda)$ the set of all piecewise affine maps from $C(\lambda)$.

\begin{proposition}\label{p:3}~The set $PA(\lambda)_{leo}$ is dense in $C(\lambda)$. \end{proposition}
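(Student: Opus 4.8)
The plan is, given $f\in C(\lambda)$ and $\eps>0$, to build a piecewise affine $g\in C(\lambda)$ that is leo with $\rho(f,g)<\eps$. First I would fix $n$ so large that $\omega_f(1/n)<\eps/4$ (with $\omega_f$ the modulus of continuity of $f$) and write $I_j=[\frac jn,\frac{j+1}{n}]$. On each $I_j$ I would replace $f$ by a piecewise affine ``sawtooth'' $g\upharpoonright I_j$ of constant slope $\pm\sigma_j$ that sweeps up and down a range window $W_j\subset[0,1]$ of width close to $\eps$ centred at $f(\frac jn)$, matching $f$ at the two endpoints of $I_j$. Each such $g$ automatically satisfies $|g(x)-f(x)|<\eps$ on $I_j$, so $\rho(f,g)<\eps$; since $g(I_j)=W_j$ has width $\approx\eps\gg 1/n$ while the horizontal extent of $I_j$ is $1/n$, the slopes $\sigma_j$ are necessarily large (in particular $\ge 2$). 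The map $g$ is piecewise affine, and it will lie in $PA(\lambda)$ once its total occupation density is identically $1$; recall that any element of $PA(\lambda)$ has all slopes of absolute value $\ge 1$, because measure preservation forces $\sum 1/|g'|=1$ on each level.

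The first technical point is global measure preservation: a constant-slope sweep contributes the uniform density $(1/n)/|W_j|$ to each point of $W_j$, so the occupation density at a level $y$ is $\frac1n\sum_{j:\,y\in W_j}1/|W_j|$. Because $f$ preserves $\lambda$ the centres $f(\frac jn)$ equidistribute, so with $|W_j|\approx\eps$ this sum is $\approx 1$; adjusting the widths and the (piecewise constant) density profiles of the sweeps so that it is identically $1$ puts $g$ in $C(\lambda)$, and this balancing is the main technical step of the construction. Next I would record the transition matrix $T$ on $\{0,\dots,n-1\}$ with $a\to b$ iff $I_b\subset g(I_a)=W_a$, and show $T$ is primitive. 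Aperiodicity is immediate from a self-loop: near a fixed point of $f$ we have $f(\frac jn)\approx\frac jn$, so $I_j\subset W_j$. Irreducibility is the heart of the matter: if a proper union $Y$ of grid intervals were forward invariant, $g(Y)\subset Y$, then since $g$ is surjective (Proposition \ref{p:5}(ix)) the relevant boundary level of $Y$ is attained at an interior point $x_1$ of some $I_{a_1}\subset Y$; but then $W_{a_1}\supset(f(x_1)-\tfrac\eps2,f(x_1)+\tfrac\eps2)$ straddles that boundary and covers grid intervals on both sides, so $g(I_{a_1})\not\subset Y$, a contradiction. Hence $T^N>0$ for some $N$, i.e.\ $g^N(I_a)=[0,1]$ for every $a$.

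Finally I would deduce the leo property. Given a nonempty open interval $U$, choose a subinterval lying in a single lap of $g$; since the slopes are $\ge2$ its images grow geometrically, so after finitely many steps some iterate contains a whole lap of $g$, whose image is an entire window $W$ of width $\approx\eps$ and hence contains a grid interval $I_a$. Then $g^{k+N}(U)\supset g^N(I_a)=[0,1]$, so $g$ is leo. (Alternatively, primitivity yields $\J(g)=\{[0,1]\}$ through Proposition \ref{p:5} — the map $g^2$ has slopes $\ge4$ and so finitely many fixed points, forcing $\J(g)$ to be a finite tiling, which $g^N(I_a)=[0,1]$ rules out unless it is trivial — whence $g$ is topologically mixing by Lemma \ref{l:1}(ii); as $g$ attains $0$ and $1$ at interior points, Lemma \ref{l:1}(iii) then gives leo.) The main obstacle is the irreducibility step, equivalently topological mixing: a map that is merely $\lambda$-equivalent to $f$ on each $I_j$ (as in Lemma \ref{l:2}) keeps $g(I_j)=f(I_j)$ and therefore preserves any invariant partition of $f$, so it is precisely the freedom to use windows of width $\approx\eps$ — larger than the oscillation of $f$ — that creates the crossing edges needed to destroy all invariant partitions while staying within $\eps$ of $f$.
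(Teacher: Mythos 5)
Your route is genuinely different from the paper's: the paper first quotes \cite{Bo91} for the density of $PA(\lambda)$ in $C(\lambda)$, proves that $PA(\lambda)_{leo}=PA(\lambda)_{tmix}$, and then destroys the obstruction to mixing by finitely many regular $3$-fold window perturbations at the junction points of the finite Barge--Martin collection $\J(d^{\star})$ of Proposition \ref{p:5}. You instead attempt a direct global construction. Unfortunately the step you yourself call ``the main technical step'' --- making the sawtooth map exactly Lebesgue-preserving --- is precisely where all of the difficulty of approximation inside $C(\lambda)$ lives, and you only assert it. The occupation density $D(y)=\frac1n\sum_{j:\,y\in W_j}|W_j|^{-1}$ is only approximately $1$: the error depends on how well the discrete sample $\{f(j/n)\}_j$ equidistributes at scale $\eps$, which does not follow directly from $\lambda$-preservation of $f$ (the set $f^{-1}([y-\eps/2,y+\eps/2])$ has measure about $\eps$, but the proportion of grid points it captures can deviate, and near $y\approx 0,1$ the windows must be truncated). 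Turning $D\approx 1$ into $D\equiv 1$ while simultaneously keeping $g$ continuous, matching the prescribed values at the grid points, keeping $\rho(f,g)<\eps$, and keeping every window of width comparable to $\eps$ (which your mixing argument needs) is a real construction that has to be exhibited. Nothing in the paper's toolkit does it for you: as you correctly note, the $\lambda$-equivalent perturbations of Lemma \ref{l:2} keep $g(I_j)=f(I_j)$ and so cannot produce wide windows, and this is exactly why the paper works instead with a piecewise affine map from \cite{Bo91} and thereafter uses only perturbations that preserve $\lambda$ by construction.

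The second gap is in your irreducibility argument. From $g(Y)\subset Y$ and surjectivity of $g$ you conclude that the boundary level of $Y$ is attained at an interior point $x_1$ of some $I_{a_1}\subset Y$; surjectivity only provides a preimage somewhere in $[0,1]$, possibly outside $Y$, so no contradiction with invariance follows. Moreover $W_{a_1}$ is centred at $f(a_1/n)$, not at $f(x_1)$, and even if the boundary point lies in $W_{a_1}$, the window could terminate exactly there and remain inside $Y$, so ``straddles'' is unjustified. The gap is reparable: since $g$ preserves $\lambda$, $g(Y)\subset Y$ upgrades to $g(Y)=Y$ and $g^{-1}(Y)=Y$ modulo $\lambda$-null sets, so the windows of grid intervals outside $Y$ must avoid $Y^{\circ}$; then adjacent centres satisfy $|f(j/n)-f((j+1)/n)|\le\omega_f(1/n)<\eps/4$, while windows lying on opposite sides of a boundary point of $Y$ would force a jump of order $\eps$ across that point, contradicting continuity of $f$. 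But that is an argument you still need to write, not the one you wrote. (A smaller point of the same kind: with slopes merely $\ge 2$ the geometric-growth argument for leo can stall when an iterate straddles a critical point; you should invoke the fact that your slopes are actually of order $n\eps\gg 2$.) By contrast, the paper never needs any primitivity argument: once $\#\J=1$, Lemma \ref{l:1} together with the claim $PA(\lambda)_{leo}=PA(\lambda)_{tmix}$ finishes the proof.
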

\begin{proof}Fix an $f\in C(\lambda)$ and $\eps >0$. It had been shown in \cite{Bo91} that there exists a map $d^{\star}\in PA(\lambda)$ such that
$\rho(d^{\star},f)<\eps$.

Let us show with the help of Lemma \ref{l:2} that there exists a map $d^{\star\star}\in PA(\lambda)_{leo}$ for which $\rho(d^{\star\star},d^{\star})<\eps$. First
we prove\newline
\noindent {\bf Claim.}~$PA(\lambda)_{leo}=PA(\lambda)_{tmix}$.\newline
\noindent {\it Proof of Claim.}~Any leo map is topologically mixing. So let $f\in PA(\lambda)_{tmix}$ and show that $f\in PA(\lambda)_{leo}$. If $f^{-1}(0)\cap
(0,1)=f^{-1}(1)\cap
(0,1)=\emptyset$, then since $f$ is surjective either $f^{-1}(0)=\{0\}$, $f^{-1}(1)=\{1\}$ or $f^{-1}(0)=\{1\}$ and $f^{-1}(1)=\{0\}$. But $f\in C(\lambda)$, so
$f'\equiv 1$ on some
neighborhood of $\{0,1\}$ in the first case or $f'\equiv-1$ on some neighborhood of $\{0,1\}$ in the latter case - a contradiction with topological
mixing of $f$. So assume that
$f^{-1}(0)\cap (0,1)\neq\emptyset$. As in the proof of Lemma \ref{l:1}, since $f$ is continuous and surjective, $f^{-2}(0)\cap (0,1)\neq\emptyset$ and by Lemma
\ref{l:1}(iii), it is
sufficient to show that $f^{-2}(1)\cap (0,1)\neq\emptyset$, resp. $f^{-1}(1)\cap (0,1)\neq\emptyset$. Let $f^{-1}(1)\cap (0,1)=\emptyset$. We are done if
$\{0\}\subset f^{-1}(1)$, since then
$f^{-2}(1)\cap (0,1)\cap f^{-1}(0)\neq\emptyset$. It remains to comment the case $f^{-1}(1)=\{1\}$. Then since $f\in C(\lambda)$, $f'\equiv 1$ on some
neighborhood of $1$ - a contradiction
with topological mixing of $f$. The case when $f^{-1}(1)\cap (0,1)\neq\emptyset$ can be captured analogously.
This finishes the proof of the claim.

By our claim we are done if $d^{\star}\in PA(\lambda)_{tmix}$,
so assume that this is not the case.
Notice that since $d^{\star}$ is piecewise affine,
the set $\J(d^{\star})$ is a finite set, $\J(d^{\star})=\{J_i\colon~i=1,\dots,k\}$ with $2 \le k < \infty$, and thus
the set $[0,1]\setminus \bigcup\J(d^{\star})$ has a
finite number of connected components, each one is an interval.
Without loss of generality we can assume that each of these intervals is reduced to a single point
(if it were not the case,
we could use Proposition \ref{p:5} and a finite number of  regular $m$-fold, $m\ge 3$, piecewise affine window perturbations on a finite collection of
sufficiently small adjacent
subintervals of
those connected components as described in Lemma \ref{l:2} - see Figure \ref{fig:dendritemap3}(Left)).
Let $J=[a,b]$ and $J'=[b,c]$ be two adjacent element of $\J(d^{\star})$ - see Figure \ref{fig:dendritemap3}(Right).

\begin{figure}[htb!!]
\hspace*{.5cm}\includegraphics[width=.8\textwidth]{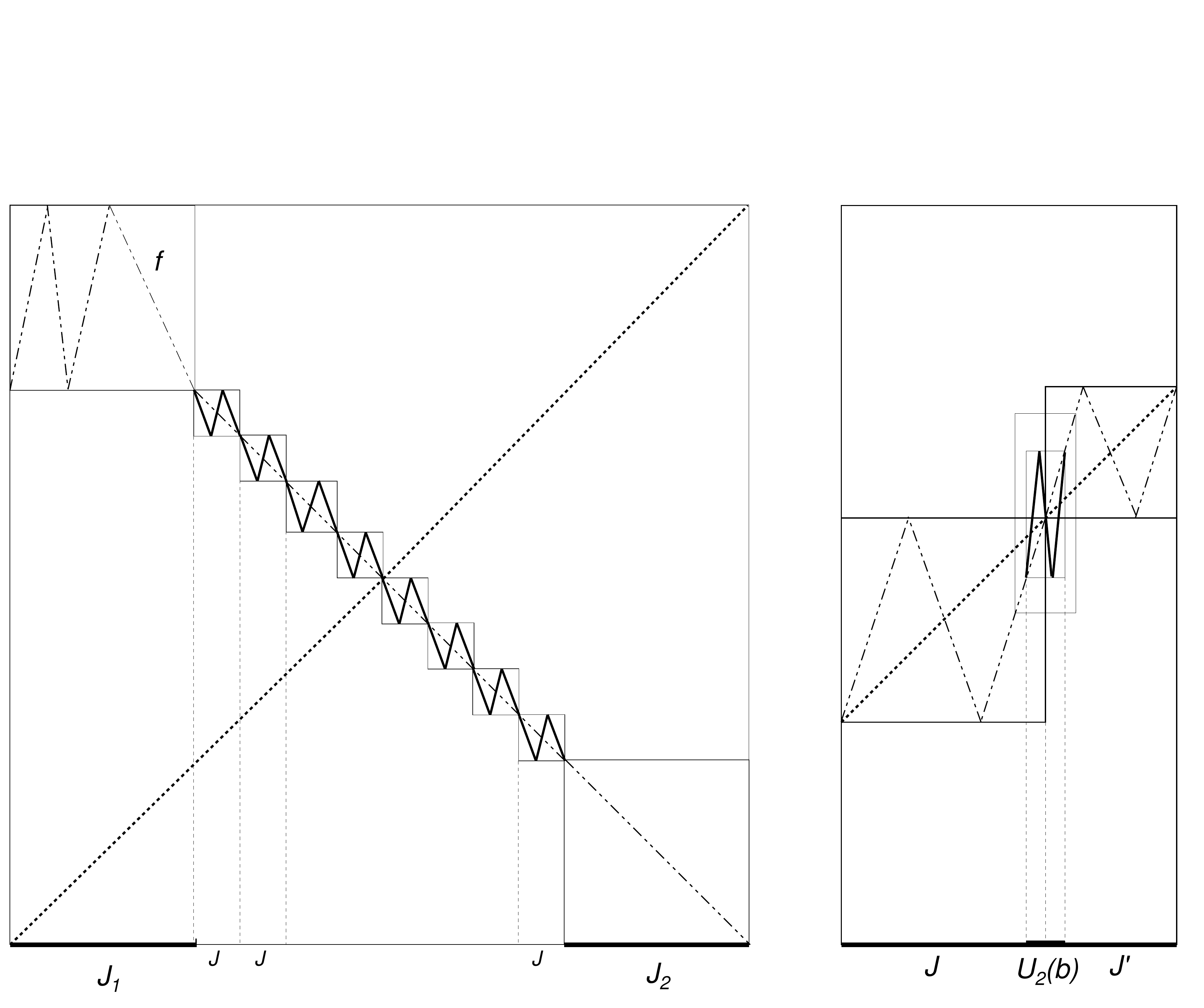}
\caption{Left: $f\in C(\lambda)$, $\bigcup\J(f)=\bigcup\{J_i\}_{i=1}^2$ is not dense, regular $3$-fold perturbations of $f$ on new $J$'s; Right: Perturbation on
$U_1(b)$ and $U_2(b)\subset
U_1(b)$ from the proof of Prop. \ref{p:3}.}\label{fig:dendritemap3}
\end{figure}

\begin{itemize}
\item Using a piecewise affine window perturbation (not necessarily regular) on some neighborhood of $b$ if necessary, w.l.o.g. we can assume that $d^{\star}$
    is strictly monotone with a
    constant slope on neighborhood $U_1(b)$ of $b$.
\item Choosing sufficiently small $\eps_1<\eps$ we can consider the regular $3$-fold window perturbation of $d^{\star}$ on $U_2(b)\subset U_1(b)$ resulting in
    the map $d^{\star}_1\in
    PA(\lambda)$ satisfying $\rho(d^{\star},d^{\star}_1)<\eps_1$. Moreover, by Proposition \ref{p:5} and Lemma \ref{l:1} either

\noindent - $\#\J(d^{\star}_1)=k-1<\#\J(d^{\star})=k$ in the case of Equation \eqref{e:18},  resp. Equation
\eqref{e:28} with $b\in\Fix(f)$ or

\noindent - $\#\J(d^{\star}_1)=k-2<\#\J(d^{\star})=k$ in the case of Equation \eqref{e:28} with $b\notin\Fix(f)$.

\end{itemize}

Finitely many modifications  of $d^{\star}$ with $\eps_1,\dots,\eps_{\ell}$, $\ell\le k-1$, satisfying $$\eps_1+\cdots +\eps_{\ell}<\eps$$ result to maps
$d^{\star}_1,\dots,
d^{\star}_{\ell}$ for which $$\rho(d^{\star}_{i},d^{\star}_{i+1})<\eps_{i+1}, \quad  i\in\{1,\dots,\ell-1\},$$  $d^{\star}_{\ell}\in C(\lambda)$,
$\#\J(d^{\star}_{\ell})=1$, and
$$\rho(d^{\star},d^{\star}_{\ell})<\eps_{1}+\sum_{i=1}^{\ell-1}
\rho(d^{\star}_{i},d^{\star}_{i+1})<\eps.$$

Summarizing, from Lemma \ref{l:1}(iii) we obtain that  $d^{\star\star}=d^{\star}_{\ell}$ is topologically mixing hence also from $PA(\lambda)_{leo}$ and
$\rho(d^{\star\star},f)<2\eps$.
\end{proof}

\begin{theorem}\label{t:4} The $C(\lambda)$-typical function is leo. \end{theorem}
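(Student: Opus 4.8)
The plan is to show that the set of leo maps is a dense $G_\delta$ in $C(\lambda)$. Density is already in hand: by Proposition \ref{p:3} the leo maps are dense, so the whole game is to realize ``leo'' as a countable intersection of open dense sets. The guiding idea is that, \emph{for maps in $C(\lambda)$}, being leo is equivalent to being topologically mixing, and mixing can be captured by open conditions once one measures images instead of demanding exact surjectivity.

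First I would prove the reduction: for $f\in C(\lambda)$, $f$ is leo if and only if $\J(f)=\{[0,1]\}$ (mixing). One implication is immediate from Lemma \ref{l:1}. For the converse, by Lemma \ref{l:1}(iii) it suffices to show that a mixing $f\in C(\lambda)$ automatically satisfies $f^{-2}(0)\cap(0,1)\neq\emptyset\neq f^{-2}(1)\cap(0,1)$. This is exactly where Lebesgue-preservation (not just continuity) enters, extending the Claim inside the proof of Proposition \ref{p:3} from piecewise affine maps to all of $C(\lambda)$: if $f^{-2}(0)\cap(0,1)=\emptyset$ then, since under a transitive map interior points have interior preimages, an interior zero of $f$ would already force $f^{-2}(0)\cap(0,1)\neq\emptyset$, so $f^{-1}(0)\subseteq\{0,1\}$; measure preservation then makes $\lambda(f^{-1}([0,s]))=s$ localize near a single endpoint, which pins down $f=\id$ (resp. $f=1-\id$) on a one-sided neighborhood of that endpoint and contradicts mixing, while the remaining case $f^{-1}(0)=\{0,1\}$ produces an interior preimage of $1$ landing on $0$.

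Next I would exhibit mixing as a $G_\delta$ set. Fix a countable basis $\{U_k\}$ of open subintervals with rational endpoints and set $O_{k,m,n}:=\{f\in C(\lambda):\lambda(f^n(U_k))>1-\tfrac1m\}$. The crucial point is that each $O_{k,m,n}$ is \emph{open}: $f^n(U_k)$ is always an interval, and the \emph{length} of $g^n(U_k)$ is robust under uniform perturbation (losing a little at an extreme value shortens the image only a little), so the condition persists on a neighborhood. Put $P:=\bigcap_k\bigcap_m\bigcup_n O_{k,m,n}$, a $G_\delta$. I would then check that, for $f\in C(\lambda)$, membership in $P$ is equivalent to mixing: if $f\in P$ then for any basis pair $U_i,U_j$ an iterate with $\lambda(f^n(U_i))>1-\lambda(U_j)$ must meet $U_j$, giving transitivity, and $P$ rules out the transitive-but-not-mixing alternative of Lemma \ref{l:1}(i) (there all images stay inside $[0,b]$ or $[b,1]$, of length $<1$), so $f$ is mixing by Lemma \ref{l:1}; conversely a mixing, hence (by the reduction) leo, map has $f^n(U_k)=[0,1]$ for some $n$, whence $f\in P$.

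Combining, ``leo'' $=P$ is a $G_\delta$ containing the dense set of Proposition \ref{p:3}, hence a dense $G_\delta$, proving the theorem. The main obstacle is the non-robustness of exact surjectivity of images at the endpoints $0$ and $1$: a small uniform perturbation can strip the values $0,1$ from $f^n(U_k)$, so ``leo'' is not an open condition and cannot be written directly with the sets $\{f:\exists n,\ f^n(U_k)=[0,1]\}$. The two devices that circumvent this are (i) replacing ``onto'' by the open, robust requirement that the image be \emph{long} (property $P$), and (ii) the measure-theoretic rigidity that upgrades mixing to leo, which is precisely what makes the resulting $G_\delta$ land inside the leo maps rather than merely inside the mixing ones.
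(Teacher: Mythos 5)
Your $G_\delta$ construction is sound: the sets $O_{k,m,n}=\{f\colon\lambda(f^n(U_k))>1-\tfrac1m\}$ are indeed open, and for maps with dense periodic points membership in $P=\bigcap_k\bigcap_m\bigcup_n O_{k,m,n}$ is equivalent to topological mixing (your use of Lemma \ref{l:1}(i) to exclude the transitive non-mixing alternative is correct). Together with Proposition \ref{p:3} this proves that the $C(\lambda)$-typical map is topologically \emph{mixing}. The gap is in the reduction you use to finish: for general $f\in C(\lambda)$, mixing does \emph{not} imply leo, and your argument for that implication does not work. The step ``measure preservation makes $\lambda(f^{-1}([0,s]))=s$ localize near a single endpoint, which pins down $f=\id$ on a one-sided neighborhood'' is exactly the point where the paper's Claim (inside the proof of Proposition \ref{p:3}) uses piecewise affinity: if $f$ is \emph{affine} near $0$ with slope $c$ and $f^{-1}(0)=\{0\}$, then $\lambda(f^{-1}([0,s]))=s/c$ forces $c=1$. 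For a merely continuous $f$, the set $f^{-1}([0,s])$ can be an infinite union of intervals accumulating at $0$ with total measure $s$, and nothing forces $f$ to be the identity near $0$. In fact the implication is false: partition $[0,1]$ into $I_n=[2^{-(n+1)},2^{-n}]$, $n\ge 0$, and build a countably piecewise affine map fixing each $2^{-n}$ and mapping each $I_n$ onto $I_{n+1}\cup I_n\cup I_{n-1}$ (onto $I_1\cup I_0$ when $n=0$), with branch domains solving the tridiagonal system that makes every point's preimage density equal to $1$ and all slopes larger than $8/5$. Such a map lies in $C(\lambda)$ and is topologically mixing by the standard expansion argument, yet $f^{-1}(0)=\{0\}$, so $f^{-2}(0)\cap(0,1)=\emptyset$ and $f$ is not leo by Lemma \ref{l:1}(iii). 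Hence your set $P$ is a dense $G_\delta$ of mixing maps which contains non-leo maps, and the theorem does not follow as written.

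The repair is exactly the paper's endpoint device. The sets $\{f\in C(\lambda)\colon f(0)\in(0,1)\}$ and $\{f\in C(\lambda)\colon f(1)\in(0,1)\}$ are open and dense (fold a small window at the endpoint, as in the paper's proof of Theorem \ref{t:4}). If $f$ lies in $P$ and in both of these sets, then $f$ is mixing and surjective (Proposition \ref{p:5}(ix)), so there are $a,b$ with $f(a)=0$, $f(b)=1$, necessarily in $(0,1)$; since under a continuous surjection every interior point has an interior preimage, both $f^{-2}(0)\cap(0,1)$ and $f^{-2}(1)\cap(0,1)$ are nonempty, and Lemma \ref{l:1}(iii) upgrades mixing to leo. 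With this correction your argument becomes a valid proof, organized differently from the paper's: the paper never isolates an explicit $G_\delta$ characterization of mixing, but instead chooses the radii $\eps_n$ around a dense leo family $\{f_n\}$ (with $f_n(0),f_n(1)\in(0,1)$) so that every $f$ in the resulting residual set has long images $f^{j(n,m)}(J_m)\supset(1/n,1-1/n)$ and satisfies $f(0),f(1)\in(0,1)$, then gains exact surjectivity with one extra iterate through $[a,b]$. Your set $P$ is a cleaner way to handle the mixing part; the endpoint intersection is indispensable in both approaches, not a removable convenience.
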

\begin{proof}
By Proposition \ref{p:3} we can fix a countable dense collection $\{f_n\}_n$ from $PA(\lambda)_{leo}$.
Using a $2$-fold piecewise affine window perturbation of $f_n$ on $[0,\eps]$, resp. $[1-\eps,1]$ if necessary - see Example \ref{e:1} and Lemma \ref{l:2} -
without loss of generality we can
assume that for each $n \in \N$ we have $f_n(0) \in (0,1)$ and $f_n(1) \in (0,1)$.

Let $B(g,\varepsilon) := \{f \in C(\lambda): \rho(f,g) < \varepsilon\}$.
For a given sequence $\{ \varepsilon_n : \varepsilon_n > 0\}_n$ which we will choose later,
we consider the dense $G_\delta$ set
$$G :=  \bigcap_{N\ge 1}  \bigcup_{n \ge N} B(f_n,\varepsilon_n).$$
We claim that we can choose $\varepsilon_n$ in such a way that any $f \in G$ is leo.

Consider a sequence $(J_m)_{m \in \N}$ of all open rational subintervals of $(0,1)$.
For each $n,m$ there is a $j(n,m) \in \N$ such that $f_n^{j(n,m)}(J_m) = [0,1]$. Choose $\varepsilon_n > 0$ so small so that for all
$f \in B(f_n,\varepsilon_n)$ we have $f^{j(n,m)}(J_m) \supset (1/n, 1-1/n)$ for $m=1,2,\dots, n$. Additionally we assume that
$\varepsilon_n > 0$ is so small that $f(0) \in (0,1)$ and $f(1) \in (0,1)$.

Now consider an $f \in G$.  Then there exists an infinite sequence $(n_k)_{k \in \N}$ so that $f \in B(f_{n_k},\varepsilon_{n_k})$.
By Proposition \ref{p:5}(ix) the map $f$ is surjective.  Thus there are points $a,b \in [0,1]$ such that $f(a) = 0$ and $f(b) =1$.  By the choice
of $\varepsilon_n$ we have such points $a,b \in (0,1)$.

Fix an open  interval $J \subset [0,1]$.  Choose an $m$ so that $J_m \subset J$.  Suppose $n_k$ satisfies the following conditions
(i) $n_k \ge m$ and (ii) $a,b \in (1/n_k,1-1/n_k )$.  Assume $a < b$, the other case being similar.
By construction of $G$ and the above two assumptions we have $$f^{j(n_k,m)}(J) \supset f^{j(n_k,m)}(J_m) \supset (1/n_k,1-1/n_k) \supset [a,b].$$
 Thus $f^{j(n_k,m)+1}(J) \supset f([a,b]) = [0,1].$
\end{proof}

For  integers $a \ge b \ge 0$ let $f^{[a,b]}(x) := \{f^j(x): a \le j
\le b\}$.  A family of orbit segments $\{f^{[a_j,b_j]}(x_j)\}_{j=1}^n$
is an \textit{$N$-spaced specification} if $a_i - b_{i-1} \ge N$ for $2 \le i \le n$.
We say that a specification $\{f^{[a_j,b_j]} (x_j)\}_{j=1}^n$ is \textit{$\varepsilon$-shadowed} by $y  \in [0,1]$ if
$$d(f^k(y),f^k(x_i)) \le \varepsilon \text{ for } a_i \le k \le b_i \text{ and } 1 \le i \le n.$$

We say that $f$ has the \textit{specification property} if for any  $\varepsilon > 0$
there is a constant $N = N(\varepsilon)$  such that any $N$-spaced specification
$\{f^{[a_j,b_j]} (x_j)\}_{j=1}^n$ is $\varepsilon$-shadowed by some $y \in [0,1]$.
If additionally, $y$ can be chosen in such
a way that $f^{b_n-a_0 +N}(y) = y$ then $f$ has the \textit{periodic specification property}.

Applying a result of Blokh \cite{Bl} we obtain
\begin{corollary}\label{blokh}
 The $C(\lambda)$-typical function satisfies the periodic specification property.
\end{corollary}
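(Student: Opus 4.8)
The plan is to derive the corollary by combining Theorem \ref{t:4} with Blokh's theorem \cite{Bl}, which asserts that every topologically mixing continuous interval map enjoys the periodic specification property (in the precise sense recalled just above, where the shadowing point $y$ can be taken periodic of period $b_n - a_0 + N$). Thus it suffices to show that the $C(\lambda)$-typical function is topologically mixing, and for this I would invoke the leo property already established in Theorem \ref{t:4}.

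First I would record the elementary implication that leo maps are topologically mixing. Let $f \in C(\lambda)$ be leo and let $U$ be any nonempty open set. By definition there is an $n \in \N$ with $f^n(U) = [0,1]$. Since $f$ is surjective by Proposition \ref{p:5}(ix), we have $f^{m}(U) = f^{m-n}([0,1]) = [0,1]$ for every $m \ge n$, and therefore $f^{m}(U) \cap V \ne \emptyset$ for all $m \ge n$ and every nonempty open $V$. This is exactly topological mixing. (The same observation already appears at the start of the Claim in the proof of Proposition \ref{p:3}.)

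With this in hand the argument closes quickly. Theorem \ref{t:4} furnishes a dense $G_\delta$ set $G \subset C(\lambda)$ every element of which is leo, and hence, by the previous paragraph, topologically mixing. Applying Blokh's theorem to each $f \in G$ yields that every such $f$ satisfies the periodic specification property. Since this property therefore holds on the dense $G_\delta$ set $G$, it is $C(\lambda)$-typical, which is the assertion of the corollary.

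The one point requiring care — the main (and essentially only) obstacle — is invoking the correct form of Blokh's result: his decomposition theorem gives the specification property for mixing interval maps, and one must ensure that the cited statement delivers the \emph{periodic} refinement matching the definition given above the corollary, rather than the plain (non-periodic) specification property. I would therefore point to the exact statement in \cite{Bl} that provides the periodic shadowing orbit, so that the conclusion is literally the periodic specification property as defined here.
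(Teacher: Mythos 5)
Your proposal is correct and is exactly the paper's (implicitly stated) argument: the authors simply combine Theorem \ref{t:4} with Blokh's theorem that topologically mixing interval maps satisfy the periodic specification property, the step from leo to topologically mixing being the same elementary observation you record (and which already appears in the Claim in the proof of Proposition \ref{p:3}). Your extra care about citing the precise periodic form of Blokh's result is a reasonable refinement of what the paper leaves as a bare citation.
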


\section{Mixing properties in $C(\lambda)$}\label{s:3}
We start by introducing three classical types of mixing in a measure-theoretical dynamics \cite{Wa82}. We state them in the context of $C(\lambda)$.

\begin{definition}A map $f\in C(\lambda)$ is called
\begin{itemize}
\item[(i)] \textit{ergodic}, if for every $A,B\in\B$,
$$\lim_{n\to\infty}\frac{1}{n}\sum_{j=0}^{n-1}\lambda(f^{-j}(A)\cap B)=\lambda(A)\lambda(B).
$$
\item[(ii)] \textit{weakly mixing}, if for every $A,B\in\B$,
$$\lim_{n\to\infty}\frac{1}{n}\sum_{j=0}^{n-1}\vert \lambda(f^{-j}(A)\cap B)-\lambda(A)\lambda(B)\vert=0.
$$
\item[(iii)] \textit{strongly mixing}, if for every $A,B\in\B$,
$$\lim_{n\to\infty}\lambda(f^{-j}(A)\cap B)=\lambda(A)\lambda(B).
$$

\end{itemize}
\end{definition}

Analogously as before, for a subset $X\subset C(\lambda)$ we denote by $X_{slope>1}$ the set of all maps $f$ from $X$ for which $\vert f'(x)\vert>1$
for all $x\in [0,1]$ at which derivative of $f$ exists.

We denote by $PAM(\lambda)$ the set of all piecewise affine Markov maps in $PA(\lambda)$, i.e., maps for which all points of discontinuity of the derivative and
also both endpoints $0,1$ are
eventually periodic.

\begin{proposition}\label{p:6}~The set $PAM(\lambda)_{leo}$ is dense in $C(\lambda)$. \end{proposition}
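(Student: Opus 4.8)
The plan is to combine Proposition~\ref{p:3} with a measure--preserving ``snapping to a grid'' argument. By Proposition~\ref{p:3} it suffices, given $f\in PA(\lambda)_{leo}$ and $\eps>0$, to produce $g\in PAM(\lambda)_{leo}$ with $\rho(f,g)<\eps$. Moreover, by the Claim established inside the proof of Proposition~\ref{p:3} one has $PA(\lambda)_{leo}=PA(\lambda)_{tmix}$, and the same reasoning applies to $g\in PAM(\lambda)\subset PA(\lambda)$; hence to secure the leo property of the approximant it is enough to secure that it is topologically mixing, i.e.\ by Lemma~\ref{l:1}(ii) that $\J(g)=\{[0,1]\}$. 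The guiding idea is to replace $f$ by a piecewise affine $g$ for which the finite set $\Lambda_N:=\{i/N:0\le i\le N\}$ is forward invariant and which sends every slope--discontinuity into $\Lambda_N$; since $\Lambda_N$ is finite, all breakpoints and the endpoints $0,1$ are then eventually periodic, so $g$ is Markov.

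First I would fix a fine mesh $1/N$ with $N$ large and, on each $I_j=[\tfrac jN,\tfrac{j+1}N]$, replace $f$ by a piecewise affine ``sawtooth'' $g\upharpoonright I_j$ of constant slope magnitude, folding back and forth in the spirit of Definition~\ref{e:1}, whose turning values and two endpoint values all lie in $\Lambda_N$ and whose image is a grid interval $W_j=[\tfrac{\alpha_j}N,\tfrac{\beta_j}N]$ lying inside the $\eps$--tube around the graph of $f$ over $I_j$. The endpoint values are chosen consistently so that consecutive laps match, giving $g\in C([0,1])$ with $g(\Lambda_N)\subset\Lambda_N$; the number of folds is free and will be used below. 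By Lemma~\ref{l:2} these local replacements keep $\rho(f,g)<\eps$ once $N$ is large, and by construction every critical value of $g$, as well as $g(0)$ and $g(1)$, lies in $\Lambda_N$. Thus $g\in PAM(\lambda)$ \emph{provided} $g$ genuinely preserves $\lambda$.

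The main obstacle is exactly this proviso. Writing $n_j:=\beta_j-\alpha_j$ for the number of grid cells covered by the $j$-th lap, a direct computation (each monotone piece crosses $W_j$ once, with constant slope) shows that the fold count cancels and that $g$ preserves $\lambda$ if and only if, for every cell index $\ell\in\{0,\dots,N-1\}$,
\begin{equation}\label{eq:balance}
\sum_{j:\ \alpha_j\le \ell<\beta_j}\frac1{n_j}=1 .
\end{equation}
Since $f$ preserves $\lambda$, each cell $[\tfrac\ell N,\tfrac{\ell+1}N]$ already has total $f$--preimage measure $1/N$, which is the continuous analogue of \eqref{eq:balance} with the true images $f(I_j)$ and the real weights $N|f(I_j)|$ in place of the integers $n_j$. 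The crux is therefore a rounding problem: choose each grid interval $W_j$ inside the $\eps$--tube so that the integer covering identities \eqref{eq:balance} hold \emph{exactly} and simultaneously for all $\ell$. I expect this to be the delicate step; I would resolve it by adapting the mesh to the (rational) slopes of $f$ so that the target weights $N|f(I_j)|$ become integers on the pieces where $f$ is affine, and then balancing the unavoidable $O(1/N)$ surpluses and deficits of \eqref{eq:balance} against one another by a transportation/flow argument on the cells, all moves staying within the tube.

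Finally I would collect the properties. The Markov property of $g$ follows from $g(\Lambda_N)\subset\Lambda_N$ together with the fact that every slope--discontinuity of $g$ is mapped into the finite forward--invariant set $\Lambda_N$, as explained above. For the leo property it suffices, by the Claim in the proof of Proposition~\ref{p:3}, to verify topological mixing, i.e.\ $\J(g)=\{[0,1]\}$; this I would obtain from Proposition~\ref{p:5} and Lemma~\ref{l:1} by arranging the incidence structure of the laps $\{I_j\mapsto W_j\}$ to be primitive, a property inherited from the mixing of the nearby leo map $f$ and only reinforced by the folding (here the freedom in the fold counts is used). Then $g\in PAM(\lambda)_{leo}$ and $\rho(f,g)<\eps$, which establishes the asserted density.
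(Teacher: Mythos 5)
Your reduction to approximating $f\in PA(\lambda)_{leo}$, your use of the Claim $PA(\lambda)_{leo}=PA(\lambda)_{tmix}$, and your balance condition $\sum_{j:\,\alpha_j\le \ell<\beta_j}1/n_j=1$ (the correct measure-preservation criterion for your grid construction, with the fold counts indeed cancelling) are all fine. But the proof has a genuine gap exactly where you flag it: you never show that the grid intervals $W_j$ can be chosen, inside the $\eps$-tube, so that these $N$ identities hold \emph{exactly}. This is not a routine rounding step, and the proposed ``transportation/flow argument'' does not obviously make sense: the unknowns are the endpoints $\alpha_j,\beta_j$, and moving one endpoint of $W_j$ by one cell changes the weight $1/n_j$ to $1/(n_j\pm 1)$ on \emph{every} cell covered by $W_j$, so each admissible move perturbs many equations simultaneously and by quantized amounts $1/n_j-1/(n_j\pm 1)$, while the contiguity of each $W_j$ and the tube constraint further restrict which moves are allowed. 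Nothing in the proposal shows this system is solvable, and since this is precisely where the difficulty of the proposition lives (the rest --- the Markov property from forward invariance of $\Lambda_N$, the reduction of leo to topological mixing --- is easy), the argument as written does not establish the statement. The leo step has the same character: ``arranging the incidence structure to be primitive'' while simultaneously satisfying the exact balance identities is asserted, not proved.

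For contrast, the paper avoids any global constraint satisfaction. It starts from the same $f\in PA(\lambda)_{leo}$ given by Proposition~\ref{p:3} and observes that the breakpoint set $R(f)$ is finite, so only finitely many breakpoints $T(f)$ fail to be eventually periodic. It then repairs them one at a time: for a bad point $t$, it picks a periodic orbit $P$ with three consecutive points $p_{i-1}<p_i<p_{i+1}$ such that $f$ is affine on $[p_{i-1},p_{i+1}]$, the orbit of $t$ enters $(p_{i-1},p_{i+1})$, and no orbit of an eventually periodic breakpoint does; a small $5$-fold piecewise affine window perturbation on $[p_{i-1},p_{i+1}]$ (Lemma~\ref{l:2}) is then arranged so that the first entry $f^{\ell}(t)$ of the orbit of $t$ into the window is sent to $f(p_i)$, i.e.\ the orbit of $t$ is rerouted onto $P$. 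Measure preservation is automatic for window perturbations, the leo property is kept, the finitely many new breakpoints created are themselves eventually periodic, and a counting argument shows $\#T$ strictly decreases, so at most $\#T$ such perturbations produce a map in $PAM(\lambda)_{leo}$ within $\eps$ of $f$. If you want to salvage your approach you must supply an actual existence proof for your balance identities; the paper's local ``reroute one critical orbit onto a periodic orbit'' trick is the standard way to sidestep that problem entirely.
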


\begin{proof}Let $f\in PA(\lambda)_{leo}$, fix $\eps>0$. Denote $R(f)$ the set containing $\{0,1\}$ and all points of discontinuity of the derivative of $f$, let
$S(f)\subset R(f)$ be
eventually periodic points from $R(f)$ and $T(f)=R(f)\setminus S(f)$.

Clearly the set $R(f)$ is finite. Fix $t\in T=T(f)$. By Proposition \ref{p:5}(ix) and Lemma \ref{l:2} we can consider a periodic orbit $P=\{p_1<\cdots<p_k\}$ of
$f$ such that for some three
consecutive points $p_{i-1}<p_i<p_{i+1}$
\begin{itemize}
\item $f\upharpoonright [p_{i-1},p_{i+1}]$ is affine,
\item $\orb(S(f),f)\cap [p_{i-1},p_{i+1}]=\emptyset$,
\item every window perturbation of $f$ by $h\in C(f;[p_{i-1},p_{i+1}])$ on $[p_{i-1},p_{i+1}]$ is $\eps/m$-close to $f$, where $m=\#T$,
\item every piecewise affine window perturbation of $f$ on $[p_{i-1},p_{i+1}]$ belongs to $PA(\lambda)_{leo}$,
\item $\orb(t,f)\cap (p_{i-1},p_{i+1})\neq\emptyset$.
\end{itemize}
Let $f^{\ell}(t)$ be the first iterate of $t$ in $(p_{i-1},p_{i+1})$. By Lemma \ref{l:2} there exists a $5$-fold piecewise affine window perturbation (not
necessarily regular) $g_1$ of $f$ by
$h$ on $[p_{i-1},p_{i+1}]$ satisfying
$$g_1(f^{\ell}(t))=g_1(p_i)=f(p_i).$$

Then $\#R(g_1)=\#R(f)+6$ and $\#S(g_1)\ge \#S(f)+7$ hence
\begin{align}&\#T(f)-1=m-1=\#R(f)+6-(\#S(f)+7)\ge \nonumber\\
&\ge\#R(g_1)-\#S(g_1)=\# T(g_1).\nonumber
\end{align}
 Repeating the above procedure maximally $m=\#T$-times, we obtain the required Markov map $g\in PAM(\lambda)_{leo}$.
\end{proof}

Let $f$ be from $PAM(\lambda)_{slope>1}$ with a Markov partition
\begin{equation*}\label{e:22}\A=\{A_0=[x_0,x_1]\le \cdots\le A_{N-1}=[x_{N-1},x_N]\},\end{equation*} where the set $P_f=\{0=x_0<\cdots<x_N=1\}$ contains orbits
of all points of discontinuity
of derivative of $f$ and of the endpoints. To each point $x\in [0,1]$ we associate its itinerary  $\Phi(x)=(\phi_i(x))_{i\ge 0}$ with respect to $\A$, i.e.,
$\phi_i(x)\in \{0,1,\dots,N-1\}$
and $f^i(x)\in A_{\phi_i(x)}$ for each $i\ge 0$ (in this settings $\Phi$ is a one-to-finite multivalued map). Since $f$ is continuous, the system
$(\Phi([0,1]),\sigma)$ is a subshift of the
full shift $(\{0,1,\dots,N-1\}^{\N_0},\sigma)$ on the symbols $\{0,1,\dots,N-1\}$ \cite{Wa82}.

Any map from $PAM(\lambda)_{leo}$ satisfies the hypothesis of \cite[Theorem 3.2]{ADU93}. So any such map is in fact exact, i.e., for every $A\in \bigcap_{n\ge 0}
T^{-n}(\B)$,
$\lambda(A)\lambda(A^c)=0$. It is known that every exact map has one-sided countable Lebesgue spectrum and hence is strongly mixing \cite[p. 115]{Wa82}. For our
purpose it will be convenient
to prove explicitly the following.

\begin{lemma}\label{l:4}Let $f$ be from $PAM(\lambda)_{slope>1}$, consider $\A$ and $\Phi$ as above. The system $([0,1],\B,\lambda,f)$ is isomorphic to the
one-sided Markov shift
$(\Phi([0,1])),\B',\mu,\sigma)$, where the measure $\mu$ on the Borel $\sigma$-algebra $\B'$ is given by the probability vector
\begin{equation}\label{v:1}p=(\lambda(A_0),\dots,\lambda(A_{N-1}))\end{equation} and the stochastic matrix $P=\left (p_{ij}\right )_{i,j=0}^{N-1}$, where
    \begin{equation}\label{m:1}
p_{ij}=\begin{cases}\frac{\lambda(A_j)}{\lambda(f(A_i))},&f(A_i)\supset A_j \\ 0,&\text{otherwise}\end{cases}.
\end{equation}
In particular every map from $PAM(\lambda)_{leo}$ is strongly mixing.
\end{lemma}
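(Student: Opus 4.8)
The plan is to show that the itinerary map $\Phi$ is an isomorphism mod $0$ and then read strong mixing off the transition matrix $P$. Write $s_i>1$ for the constant value of $|f'|$ on $A_i$; since $f\upharpoonright A_i$ is affine, $\lambda(f(A_i))=s_i\lambda(A_i)$, whence $\lambda(A_i)/\lambda(f(A_i))=1/s_i$ whenever $f(A_i)\supseteq A_j$. First I would record the two algebraic facts that make $\mu$ a shift-invariant Markov measure. Stochasticity $\sum_j p_{ij}=1$ is immediate because, by the Markov property, $f(A_i)$ is the union of exactly those $A_j$ it contains. Stationarity $pP=p$ is the identity $\sum_{i:\,f(A_i)\supseteq A_j}\lambda(A_i)\,\lambda(A_j)/\lambda(f(A_i))=\lambda(A_j)$, which reduces to $\sum_{i:\,f(A_i)\supseteq A_j}1/s_i=1$; this last equation is precisely the statement that $f$ preserves $\lambda$, since $f^{-1}(A_j)$ is the disjoint union of the affine preimages $(f\upharpoonright A_i)^{-1}(A_j)$, each of measure $\lambda(A_j)/s_i$.

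Next I would analyze the coding map $\Phi$. Equivariance $\sigma\circ\Phi=\Phi\circ f$ holds by definition of the itinerary, since $\phi_i(f(x))=\phi_{i+1}(x)$. The map $\Phi$ is multivalued only on the countable set $E:=\bigcup_{n\ge 0}f^{-n}(P_f)$, where some iterate lands on a partition endpoint; as $\lambda(P_f)=0$ and $f$ preserves $\lambda$, we get $\lambda(E)=0$, and $f(E^c)\subseteq E^c$, so off $E$ the map $\Phi$ is single-valued and $E^c$ is forward invariant. On $[0,1]\setminus E$ the map $\Phi$ is moreover injective: the cylinder $[i_0,\dots,i_{n-1}]$ pulls back to the interval $A_{i_0}\cap f^{-1}(A_{i_1})\cap\cdots\cap f^{-(n-1)}(A_{i_{n-1}})$, of length at most $(\min_i s_i)^{-(n-1)}\to 0$ because every branch expands by a factor $>1$, so distinct itineraries force distinct points.

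The computational heart, and the step I expect to be the main obstacle, is to check that $\Phi$ transports $\lambda$ to $\mu$, i.e. that for every admissible word $\lambda\big(A_{i_0}\cap f^{-1}(A_{i_1})\cap\cdots\cap f^{-(n-1)}(A_{i_{n-1}})\big)=p_{i_0}p_{i_0 i_1}\cdots p_{i_{n-2}i_{n-1}}$. I would prove this by induction on $n$. Writing $C'$ for the pullback of the shifted word $[i_1,\dots,i_{n-1}]$, admissibility of $i_0\to i_1$ gives $C'\subseteq A_{i_1}\subseteq f(A_{i_0})$, and since $f\upharpoonright A_{i_0}$ is an affine bijection of slope $s_{i_0}$ onto $f(A_{i_0})$, the full pullback equals $(f\upharpoonright A_{i_0})^{-1}(C')$ and has measure $\lambda(C')/s_{i_0}=\big(\lambda(A_{i_0})/\lambda(f(A_{i_0}))\big)\lambda(C')$; since by induction $\lambda(C')=\lambda(A_{i_1})\,p_{i_1 i_2}\cdots p_{i_{n-2}i_{n-1}}$ and $\lambda(A_{i_1})/\lambda(f(A_{i_0}))=p_{i_0 i_1}$, this equals $\lambda(A_{i_0})\,p_{i_0 i_1}\cdots p_{i_{n-2}i_{n-1}}=p_{i_0}p_{i_0 i_1}\cdots p_{i_{n-2}i_{n-1}}$. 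In particular the admissible cylinders are exactly those of positive $\mu$-measure, so $\supp\mu=\Phi([0,1])$ equals the Markov subshift $\Sigma_P$ determined by $P$. As two Borel measures agreeing on the cylinder algebra coincide, $\Phi_*\lambda=\mu$, and together with the previous paragraph this exhibits $\Phi\colon[0,1]\setminus E\to\Sigma_P$ as a measurable equivariant bijection with $\Phi_*\lambda=\mu$, i.e. an isomorphism mod $0$ between $([0,1],\B,\lambda,f)$ and $(\Phi([0,1]),\B',\mu,\sigma)$.

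Finally, for the \emph{in particular} I would specialize to $f\in PAM(\lambda)_{leo}$. Such an $f$ is expanding, hence lies in $PAM(\lambda)_{slope>1}$ (this is part of its satisfying the hypotheses of \cite[Theorem 3.2]{ADU93} recalled above), so the isomorphism just constructed applies. Moreover leo implies topological mixing, which for a Markov interval map is equivalent to primitivity of the $0/1$ transition pattern $f(A_i)\supseteq A_j$, i.e. to primitivity of $P$. A stationary Markov shift with primitive transition matrix is strongly mixing (the rows of $P^n$ converge to the stationary vector $p$), so $(\Phi([0,1]),\B',\mu,\sigma)$ is strongly mixing and, by the isomorphism, so is $([0,1],\B,\lambda,f)$.
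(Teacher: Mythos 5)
Your proof of the isomorphism itself is correct and is essentially the paper's own argument: stochasticity and stationarity of the pair $(p,P)$ coming from Lebesgue preservation, the coding map $\Phi$ being an equivariant bijection off a countable exceptional set of measure zero, and the identification $\Phi_*\lambda=\mu$ verified on cylinders (the paper does this computation in one line where you run an induction, but it is the same computation), followed by strong mixing of the shift from irreducibility and aperiodicity of $P$, which is exactly the paper's citation of \cite[Theorem 1.31]{Wa82}.

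The genuine weak point is the first sentence of your last paragraph. To make the ``in particular'' follow from the isomorphism you need the inclusion $PAM(\lambda)_{leo}\subseteq PAM(\lambda)_{slope>1}$, and your justification is circular: the paper's remark about \cite[Theorem 3.2]{ADU93} is an (unproved, citation-based) assertion used to get \emph{exactness}, and it cannot be read as a proof that leo implies expanding --- if uniform expansion were among those hypotheses, that remark would simply be restating the claim you need. Moreover the inclusion is not a triviality. For $f\in PA(\lambda)$ every slope is automatically of modulus $\ge 1$, since measure preservation forces $\sum_{x\in f^{-1}(y)}1/\vert f'(x)\vert=1$ for a.e.\ $y$; but a slope equal to $1$ cannot be excluded by transitivity alone: the map defined by $f(x)=\tfrac12+2x$ on $[0,\tfrac14]$, $f(x)=\tfrac32-2x$ on $[\tfrac14,\tfrac12]$, $f(x)=1-x$ on $[\tfrac12,1]$ belongs to $PA(\lambda)$, is transitive (it swaps the two halves and $f^2$ restricted to either half is a full tent map), and has a slope-$1$ branch. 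So ruling out slope $1$ must genuinely use topological mixing/leo, and neither your text nor the paper's proof of the lemma supplies such an argument: the paper's proof of the ``in particular'' is explicitly restricted to $PAM(\lambda)_{leo}\cap PAM(\lambda)_{slope>1}$, the general case being covered not inside the lemma but by the exactness argument via \cite{ADU93} in the paragraph preceding it. To close the gap you should either prove that a leo map in $PAM(\lambda)$ has no slope-$1$ branch (for such a branch $A$ one has $f^{-1}(f(A)^{\circ})=A^{\circ}$, so $f(A)^{\circ}$ is a barrier that no other branch image can cross, and a continuity argument then yields an invariant or period-two structure contradicting mixing), or do as the paper does and invoke exactness for those leo maps whose slopes are not all greater than $1$.
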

\begin{proof}For the definition of isomorphic measure theoretic systems see \cite[Definition 2.4]{Wa82}. Clearly, the vector $p$ is a probability vector and,
since $f\in C(\lambda)$ with
constant derivative on each $A_i$, the matrix $P$ is stochastic and $pP=p$. So the measure $\mu$ is defined well on the Borel $\sigma$-algebra $\B'$ generated by
the cylinders in
$\Phi([0,1])$.  Since $f\in PAM(\lambda)_{slope>1}$, $\Phi$ is injective. Let $N$ be the set of those points $x$ from $[0,1]$ for which the set $\Phi(x)$
consists of more itineraries. Then
$N$ is countable and, since $\Phi$ is a one-to-finite multivalued map, the set $\Phi(N)$ is also countable. Hence $\Phi\colon~[0,1]\setminus N\to
\Phi([0,1]\setminus N)$ is a bijection and
$\lambda([0,1]\setminus N)=\mu(\Phi([0,1]\setminus N))=1$. Obviously,
$$\Phi\circ f=\sigma\circ\Phi\text{ on }[0,1]\setminus N.
$$
To finish the proof we need to show that $$\lambda(\Phi^{-1}(A))=\mu(A)\text{ for each }A\in\B'.$$
Obviously it is sufficient to verify the last equality for cylinders, i.e., the sets $$C_{\phi_0,\dots,\phi_{k-1}}=\{(\phi_i(x))_{i\ge 0}\in
\phi([0,1])\colon~\phi_0(x)=\phi_0,\dots,\phi_{k-1}(x)=\phi_{k-1}\},$$
where $k\in\N$ and $\phi_0,\dots,\phi_{k-1}\in \{0,\dots,N-1\}$. By the definition of the Markov shift
$$\mu(C_{\phi_0,\dots,\phi_{k-1}})=\lambda(A_{\phi_0})\prod_{j=1}^{k-1}\frac{\lambda(A_{\phi_j})}{\lambda(f(A_{\phi_{j-1}}))}=\clubsuit,$$
where the second factor equals to one if $k=1$. Since $f$ has a constant derivative on each $A_i$,
$$\clubsuit=\lambda(\Phi^{-1}(C_{\phi_0,\dots,\phi_k})).
$$
If $f\in PAM(\lambda)_{leo}\cap PAM(\lambda)_{slope>1}$, the matrix $P$ is irreducible and aperiodic,
hence $$(\Phi([0,1])),\B',\mu,\sigma)$$ is strogly mixing \cite[Theorem 1.31]{Wa82}. By the previous, it
is also true for isomorphic $([0,1],\B,\lambda,f)$.
\end{proof}

\begin{corollary}\label{p:4}The set $C(\lambda)_{smix}$ of strongly mixing maps is dense in $C(\lambda)$.\end{corollary}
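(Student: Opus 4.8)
The plan is to deduce the density of $C(\lambda)_{smix}$ directly from the two preceding results, with no further approximation argument needed: I already have in hand a dense class of maps, each member of which is strongly mixing, and density of a subset forces density of the containing set.

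Concretely, I would first invoke Proposition \ref{p:6}, which guarantees that $PAM(\lambda)_{leo}$ is dense in $(C(\lambda),\rho)$. Thus, given an arbitrary $f\in C(\lambda)$ and $\eps>0$, I can pick $g\in PAM(\lambda)_{leo}$ with $\rho(f,g)<\eps$. I would then quote the final assertion of Lemma \ref{l:4}, that every map in $PAM(\lambda)_{leo}$ is strongly mixing; this yields the inclusion $PAM(\lambda)_{leo}\subseteq C(\lambda)_{smix}$, so that the approximant $g$ already lies in $C(\lambda)_{smix}$. Since $f$ and $\eps$ were arbitrary, $C(\lambda)_{smix}$ is dense, which is the claim.

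I do not expect a genuine obstacle at the level of the corollary itself, since all the substantive effort has been spent earlier: the hard content is the Markov coding of Lemma \ref{l:4}, where the dynamics of a leo Markov map is shown to be measure-theoretically isomorphic to an irreducible aperiodic one-sided Markov shift and hence strongly mixing, together with the perturbation scheme of Proposition \ref{p:6}. The only step that deserves a word of care is that Lemma \ref{l:4} derives strong mixing under the slope hypothesis, so one should observe that a leo (in particular transitive) piecewise affine $\lambda$-preserving Markov map automatically has all branch slopes of absolute value greater than $1$: the measure-preservation relation $\sum_{x\in f^{-1}(y)}1/|f'(x)|=1$ would otherwise force a branch of slope $1$ acting as a local isometry with no other preimages, which is incompatible with the leo property. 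With this remark the strong-mixing conclusion of Lemma \ref{l:4} applies to all of $PAM(\lambda)_{leo}$, and the corollary follows.
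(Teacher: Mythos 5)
Your core argument is exactly the paper's proof of Corollary \ref{p:4}: combine Proposition \ref{p:6} (density of $PAM(\lambda)_{leo}$) with the final assertion of Lemma \ref{l:4} (every map in $PAM(\lambda)_{leo}$ is strongly mixing). Had you simply cited that assertion, you would be done. The genuine problem is the auxiliary claim you add to reconcile the slope hypothesis in the body of Lemma \ref{l:4} with its ``in particular'' clause: it is \emph{false} that a leo map in $PAM(\lambda)$ automatically has all slopes of absolute value greater than $1$. The first half of your mechanism is correct --- if some lap has $\vert f'\vert=1$, the identity $\sum_{x\in f^{-1}(y)}1/\vert f'(x)\vert=1$ forces that lap to map isometrically onto its image with no other preimages --- but this is not incompatible with leo. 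Counterexample: let $f(x)=1-x$ on $[1/10,3/10]$; on $[0,1/10]$ let $f$ be the uniform $3$-fold zigzag onto $[9/10,1]$ with $f(0)=1$, $f(1/10)=9/10$ (slopes $\pm 3$); on $[3/10,1]$ let $f$ be the uniform $7$-fold zigzag onto $[0,7/10]$ with $f(3/10)=7/10$, $f(1)=0$ (slopes $\pm 7$). The weighted preimage count of a.e.\ $y$ is $3\cdot\tfrac13$, $1\cdot 1$ or $7\cdot\tfrac17$, so $f\in PA(\lambda)$; every break point falls on the set $\{0,1,7/10\}$ within two iterates, $7/10$ is fixed and $0\leftrightarrow 1$, so $f$ is Markov. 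And $f$ is leo: the slope-one lap maps isometrically onto $[7/10,9/10]$, which lies inside the region where $\vert f'\vert=7$, so the image of any interval grows under iteration until it contains a full lap; since $f([9/10,1])=f([7/10,9/10])=[0,7/10]$ and $f([0,7/10])=[0,1]$, a few more iterates then give $[0,1]$. Thus $f\in PAM(\lambda)_{leo}$ has a lap with $\vert f'\vert=1$, exhibiting an isometric lap with no other preimages inside a leo map.

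So your bridge collapses, and with it the stated justification for applying Lemma \ref{l:4} to all of $PAM(\lambda)_{leo}$. The paper closes this gap differently, in the paragraph immediately preceding Lemma \ref{l:4}: any map in $PAM(\lambda)_{leo}$ satisfies the hypotheses of \cite[Theorem 3.2]{ADU93}, hence is exact, and exact maps are strongly mixing \cite[p.~115]{Wa82}; no slope assumption is needed. That remark (or, alternatively, a strengthening of Proposition \ref{p:6} producing approximants with all slopes strictly larger than $1$) is what your proof should invoke at this point; everything else in your argument then stands.
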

\begin{proof}It is a consequence of Proposition \ref{p:6} and Lemma \ref{l:4}.
\end{proof}

\begin{theorem}\label{t:5}$C(\lambda)$-typical function is weakly mixing. \end{theorem}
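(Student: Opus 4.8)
The plan is to show that the weakly mixing maps form a dense $G_\delta$ subset of $(C(\lambda),\rho)$; since the paper's notion of typical is exactly ``dense $G_\delta$'', this suffices. Density is already available: by Corollary \ref{p:4} the strongly mixing maps are dense, and every strongly mixing map is weakly mixing, because the Ces\`aro averages of the null sequence $j\mapsto|\lambda(f^{-j}A\cap B)-\lambda(A)\lambda(B)|$ themselves tend to $0$. So the entire content is to exhibit the weakly mixing maps as a $G_\delta$.

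The enabling observation -- and the one genuinely non-routine point -- is that, although the maps here are expanding (so that $f^n$ and $g^n$ may be far apart even when $\rho(f,g)$ is tiny), the correlation functionals $f\mapsto\lambda(f^{-j}A\cap B)$ are nevertheless $\rho$-continuous for each fixed $j$ and all Borel $A,B$. Indeed, if $\rho(f_n,f)\to0$ then $h\circ f_n\to h\circ f$ uniformly, hence in $L^2(\lambda)$, for every continuous $h$; since the continuous functions are dense in $L^2(\lambda)$ and each Koopman operator $U_f\colon h\mapsto h\circ f$ is an isometry (as $f$ preserves $\lambda$), the uniformly bounded sequence $U_{f_n}$ converges to $U_f$ in the strong operator topology. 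Therefore $U_{f_n}^j\to U_f^j$ strongly and $\langle U_{f_n}^j\mathbf 1_A,\mathbf 1_B\rangle\to\langle U_f^j\mathbf 1_A,\mathbf 1_B\rangle$, i.e. $\lambda(f_n^{-j}A\cap B)\to\lambda(f^{-j}A\cap B)$. (Equivalently one checks directly that $\lambda(f_n^{-1}A\vartriangle f^{-1}A)\to0$ for an interval $A$, since that symmetric difference lies inside the $f$-preimage of an $O(\rho(f_n,f))$-neighbourhood of $\partial A$, whose $\lambda$-measure is controlled by $\lambda$-invariance; the general Borel case and higher iterates then follow by approximation and induction.) What rescues continuity is precisely that every map in $C(\lambda)$ preserves $\lambda$, so thin boundary layers have small preimage measure no matter how wildly $f$ oscillates.

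With continuity of correlations in hand, I would realise the $G_\delta$ through the classical criterion that $f$ is weakly mixing if and only if $f\times f$ is ergodic on $([0,1]^2,\lambda\times\lambda)$. Fix a countable family $\mathcal D$ of finite unions of dyadic subrectangles of $[0,1]^2$, dense in the measure algebra, and for $R\in\mathcal D$ set $a_N(R,f):=\tfrac1N\sum_{n=0}^{N-1}(\lambda\times\lambda)\big((f\times f)^{-n}R\cap R\big)$. Expanding $R$ into rectangles shows $a_N(R,\cdot)$ is a finite sum of products of one-dimensional correlations, hence $\rho$-continuous by the previous paragraph. Von Neumann's mean ergodic theorem for the isometry $U_{f\times f}$ gives that $\lim_N a_N(R,f)$ exists and always satisfies $\lim_N a_N(R,f)\ge(\lambda\times\lambda)(R)^2$, with equality for every $R\in\mathcal D$ exactly when $f\times f$ is ergodic. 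Hence
\[
\{f:\ f\times f\text{ is ergodic}\}=\bigcap_{R\in\mathcal D}\Big\{f:\ \liminf_{N}a_N(R,f)\le(\lambda\times\lambda)(R)^2\Big\},
\]
and, for each fixed $R$, the set on the right-hand side equals
\[
\bigcap_{k\ge1}\bigcap_{M\ge1}\bigcup_{N\ge M}\big\{f:\ a_N(R,f)<(\lambda\times\lambda)(R)^2+\tfrac1k\big\}.
\]
Since each innermost set is open (continuity of $a_N(R,\cdot)$), this displays the weakly mixing maps as a $G_\delta$.

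The main obstacle is the continuity statement of the second paragraph: a priori one expects correlations to be discontinuous in the uniform topology because the maps are expanding, and it is only the uniform $\lambda$-invariance across the whole space $C(\lambda)$ that saves it. Once this is granted, the rest is the standard Halmos scheme -- reduce weak mixing to ergodicity of the product, invoke the mean ergodic theorem to write the defining condition with a $\liminf$ (automatically $G_\delta$ for a pointwise-convergent sequence of continuous functionals), and combine with the density furnished by Corollary \ref{p:4}. Two routine points remain to be verified, namely that testing ergodicity of $f\times f$ on the countable dense family $\mathcal D$ is sufficient, and the equivalence \emph{weakly mixing $\Leftrightarrow$ product ergodic} for non-invertible $\lambda$-preserving maps; both are standard for measure-preserving systems.
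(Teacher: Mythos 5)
Your proof is correct, but it takes a genuinely different route from the paper's. The paper never shows that the weakly mixing maps form a $G_\delta$ set; instead it builds one concrete dense $G_\delta$, namely $\mathcal G=\bigcap_{N\ge 1}\bigcup_{n\ge N}B(f_n,\varepsilon_n)$ around a dense sequence of strongly mixing maps (supplied by Proposition \ref{p:6} and Lemma \ref{l:4}), and proves that every $g\in\mathcal G$ is weakly mixing: for each $f_n$ it fixes a finite time $\ell_n$ at which the Birkhoff averages of countably many continuous test functions for $f_n\times f_n$ are close to their space averages off a set of small measure, chooses $\varepsilon_n$ so small that these finite-time estimates persist on $B(f_n,\varepsilon_n)$ (only the elementary continuity of $g\mapsto g^{\ell}(x,y)$ for fixed finite $\ell$ is used), and then invokes the Birkhoff ergodic theorem to upgrade a.e.\ subsequential convergence of the averages to ergodicity of $g\times g$. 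You instead prove the sharper structural statement that the weakly mixing maps are exactly a dense $G_\delta$, and your enabling lemma --- the $\rho$-continuity on $C(\lambda)$ of the correlation functionals $f\mapsto\lambda(f^{-j}(A)\cap B)$, obtained from strong convergence of the Koopman isometries (or the boundary-layer estimate), both of which hinge on $\lambda$-invariance --- appears nowhere in the paper; combined with von Neumann's mean ergodic theorem (existence of $\lim_N a_N(R,f)$ and the bound $\lim_N a_N(R,f)\ge(\lambda\times\lambda)(R)^2$, with equality on a dense family of rectangles characterizing ergodicity of $f\times f$), it exhibits the weakly mixing maps as a countable intersection of open sets, and density then comes for free from Corollary \ref{p:4}. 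The two points you defer are indeed routine: the equivalence of weak mixing with ergodicity of $f\times f$ holds for non-invertible measure-preserving maps (Theorem 1.24 of \cite{Wa82} needs no invertibility), and testing ergodicity on a family dense in the measure algebra is a standard approximation argument using that the projection onto invariant functions is an $L^2$-contraction. In summary, your approach buys a stronger conclusion (the weakly mixing maps form a dense $G_\delta$ rather than merely containing one), a clean separation of the $G_\delta$ structure from density, and a reusable continuity lemma for correlations on $C(\lambda)$; the paper's approach buys independence from that continuity lemma --- it runs on the same approximation-scheme template as Theorem \ref{t:4}, so it transfers to settings where correlation functionals fail to be continuous in the ambient topology.
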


\begin{proof}
By Proposition \ref{p:6}  and Lemma \ref{l:4} we can consider a countable dense set $\{f_n\}_n$ of weakly mixing maps.
Suppose $\varepsilon_n$ are strictly positive. Let
$$\mathcal G:=\bigcap_{N \ge 1}\bigcup_{n \ge N} B(f_n,\varepsilon_n).$$
Clearly $\mathcal G$ is a dense $G_\delta$.
We will show that the $\varepsilon_i$ can be chosen in such a way
that all the configurations in $\G$ are weakly mixing.

Let $\{h_j\}_{j \ge 1}$ be a countable, dense
collection  of continuous functions in $L^1(X \times X)$.
For any $f \in C(\lambda)$ and $\ell \ge 1$, let
$$S^f_{\ell}h_j(x,y)  := \frac{1}{\ell} \sum_{k=0}^{\ell-1} h_j \big( (f \times f)^k(x,y)\big).$$
The map $f$ is weakly mixing if and only if the map $f \times f$ is ergodic, and
by the Birkhoff ergodic theorem, the map $f \times f$ is ergodic  if and only if  we have
$$\lim_{\ell \to \infty} S^f_\ell h_j(x)  = \int_{X \times X} h_j(s,t) \, d(\lambda(s) \times \lambda (t))$$
for all $j \ge 1$.

For each $n$ since  $f_n$ is weakly mixing, there exists a set $B_n \subset X \times X$  and a positive integer $\ell_n$ such that
$\lambda (B_n) > 1 - \frac1i$ and
$$\Big |S^{f_n}_{\ell_n} h_j(x,y) - \int_{X \times X} h_j(s,t) \,  d(\lambda(s) \times \lambda (t)) \Big | < \frac1i$$
for all $(x,y) \in B_n$, $1 \le j \le n$. We can assume that $\lim_{n \to \infty} \ell_n = \infty$.

Now we would like to extend these estimates to the neighborhood $B(f_n,\varepsilon_n)$
for a sufficiently small strictly positive $\varepsilon_n$.
By the triangular inequality we have:
$$\begin{array}{ll}
 \Big |S^{g}_{\ell_n} h_j(x,y)   -  \int_{X \times X} h_j(s,t) \,  d(\lambda(s) \times \lambda (t)) \Big |    \le\\
 \Big |S^{g}_{\ell_n} h_j(x,y)  - S^{f_n}_{\ell_n} h_j(x,y) \Big |  +  \Big |S^{f_n}_{\ell_n} h_j(x,y)  -   \int_{X \times X} h_j(s,t) \,  d(\lambda(s) \times
 \lambda (t)) \Big |
 .\end{array}$$
For any point $(x,y)$, and any $\ell \ge 1$  the point $g^{\ell} (x,y) $ varies continuously with $g$ in a small neighborhood of $f_n$; thus
we can find $\varepsilon_n>0$,  and a set $\hat B_{n} \subset B_{n}$
of measure larger than $1-\textstyle\frac2i$
so that if $g \in B(f_n,\varepsilon_n)$, then
$$\Big |S^{g}_{\ell_n} h_j(x,y)  -  \int_{X \times X} h_j(s,t) \,  d(\lambda(s) \times \lambda (t)) \Big |    < \frac2i$$
for all $(x,y)  \in \hat B_{n}$,  $1 \le j\le i$.

For each  $g \in \G$
there is an infinite sequence $n_k$ such that $g \in B(f_{n_k},\varepsilon_{n_k})$.  Consider
$\mathcal{B}(g)  = \bigcap_{M=1}^\infty \bigcup_{i=M}^\infty \hat B_{n_k}$.
Since $\lambda(\hat B_{n_k}) > 1 - \frac1{n_k}$, it follows that  $\lambda(\mathcal{B}(g)) = 1$.

We can thus conclude that for  $\lambda$-a.e.~$(x,y)$, for all $j \ge 1$,
\begin{equation}\label{e1}
\lim_{k \to \infty} S^{g}_{\ell_n} h_j(x,y)  =  \int_{X \times X} h_j(s,t) \,  d(\lambda(s) \times \lambda (t)),
\end{equation}
and thus $g$ is weakly mixing.
\end{proof}

\begin{definition}\label{d:2}We say a piecewise monotone map $f\colon~[0,1]\to [0,1]$ is expanding
if there is a constant $c > 1$ such that $\vert f(x)-f(y)\vert > c\vert x-y\vert$
whenever $x$ and $y$ lie in the same monotone piece. If $f$ is expanding Markov
and a finite set $P_f=\{ x_0 < \cdots< x_N\}$ contains orbits of all points of discontinuity of the derivative and also of the endpoints $0,1$, we let $P^* =
\{0,\dots,N\}$ and define
$f^*\colon~P^* \to P^*$ by
$f^*(i)=j$ if $f(x_i)=x_j$.
\end{definition}

\begin{remark}\label{r:1}The set $P$ from Definition \ref{d:2} is not uniquely determined. Any set $P'=\bigcup_{k=0}^nf^{-k}(P)$, $n\in\mathbb N$, is also a
finite set that contains orbits of
all points of discontinuity of the derivative and also of the endpoints $0,1$.
\end{remark}

\begin{theorem}\cite[Theorem 2.1]{BlCo87}\label{t:13}~Expanding Markov maps $f$ and $g$ are topologically conjugate via an increasing homeomorphism $h$ if and
only if $f^*=g^*$. In this case
$h(P_f)=P_g$, where $g=h\circ f\circ h^{-1}$.
\end{theorem}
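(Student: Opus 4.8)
The plan is to prove the two implications separately, with essentially all the work in the ``if'' direction, building the conjugacy by a successive-refinement construction driven entirely by the combinatorial datum $f^*$. For the easy ``only if'' direction, suppose $h$ is an increasing homeomorphism with $g=h\circ f\circ h^{-1}$, so $g^n=h\circ f^n\circ h^{-1}$ for all $n$. As an order-preserving homeomorphism, $h$ carries each maximal interval of monotonicity of $f$ onto one of $g$ with the same orientation, hence sends turning points to turning points and endpoints to endpoints. Since $P_f$ is finite and forward invariant and $g(h(P_f))=h(f(P_f))\subseteq h(P_f)$, the set $h(P_f)$ is again finite, forward invariant, and contains the break points and endpoints of $g$; enlarging the partitions compatibly if needed (Remark \ref{r:1}), we may take $P_g=h(P_f)$. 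Writing $P_f=\{x_0<\cdots<x_N\}$ and $P_g=\{y_0<\cdots<y_N\}$, monotonicity of $h$ forces $h(x_i)=y_i$, and then $f(x_i)=x_j\iff g(y_i)=g(h(x_i))=h(f(x_i))=h(x_j)=y_j$; thus $f^*=g^*$ and $h(P_f)=P_g$.

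For the substantial ``if'' direction, assume $f^*=g^*$. As both are self-maps of $P^*=\{0,\dots,N\}$, necessarily $\#P_f=\#P_g=N+1$; write $A_i=[x_i,x_{i+1}]$ and $B_i=[y_i,y_{i+1}]$. The key point is that $f^*$ encodes all the Markov and orientation data: since $f$ is monotone on $A_i$ with $f(x_i)=x_{f^*(i)}$ and $f(x_{i+1})=x_{f^*(i+1)}$, we get $f(A_i)=[x_{f^*(i)},x_{f^*(i+1)}]$ (endpoints in increasing order), a union of consecutive $A_k$'s, and the orientation of $f$ on $A_i$ is $\mathrm{sign}(f^*(i+1)-f^*(i))$; the same identities hold for $g$ with the same right-hand sides because $g^*=f^*$. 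I would then iterate: put $P_f^{(n)}=\bigcup_{k=0}^{n}f^{-k}(P_f)$, which by Remark \ref{r:1} are finite, nested, admissible sets with $f(P_f^{(n+1)})\subseteq P_f^{(n)}$. On each interval $J$ of the $P_f^{(n)}$-partition, $f^{\,n+1}$ is monotone and $f^{\,n+1}(J)$ is a union of $A_k$'s, while expansivity gives $\mathrm{diam}(J)<c^{-(n+1)}\mathrm{diam}(f^{\,n+1}(J))\le c^{-(n+1)}$; hence $\mathrm{mesh}(P_f^{(n)})\to 0$ and $D_f:=\bigcup_n P_f^{(n)}$ is dense, and similarly $D_g:=\bigcup_n P_g^{(n)}$.

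Labelling each interval of the $P_f^{(n)}$-partition by its length-$(n+1)$ itinerary $(i_0,\dots,i_n)$ and ordering these admissible words from left to right, I claim this order is a function of $f^*$ alone. Granting this, an induction on $n$ produces order-preserving bijections $\theta_n\colon P_f^{(n)}\to P_g^{(n)}$ (the $r$-th point to the $r$-th point) compatible with the inclusions, since $g^*=f^*$ yields the same admissible words in the same order for $g$. Setting $h=\bigcup_n\theta_n\colon D_f\to D_g$ gives an order isomorphism between dense subsets of $[0,1]$ fixing $0$ and $1$, which extends uniquely to an increasing homeomorphism of $[0,1]$. Because $f$ acts on $P_f^{(n+1)}\to P_f^{(n)}$ by exactly the rule encoded in $f^*$, and $g$ acts the same way on $P_g^{(n+1)}\to P_g^{(n)}$, we have $\theta_n\circ f=g\circ\theta_{n+1}$ on $P_f^{(n+1)}$, whence $h\circ f=g\circ h$ on $D_f$ and, by continuity, on all of $[0,1]$; that is $g=h\circ f\circ h^{-1}$, and $h(P_f)=\theta_0(P_f)=P_g$.

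The main obstacle is exactly the combinatorial claim in the refinement step: that the left-to-right order of the length-$(n+1)$ itinerary words is determined by $f^*$. This is where orientation must be tracked carefully, because on a lap where $f$ is decreasing the order of the covered subintervals is reversed, so the relevant order on words is an alternating (``signed lexicographic'') order rather than the naive one. The real content is to check that this signed order depends only on the signs $\mathrm{sign}(f^*(i+1)-f^*(i))$ together with the transition data, so that it coincides for $f$ and $g$; once this order-coherence is established, the extension of $h$, the intertwining relation, and the equality $h(P_f)=P_g$ follow routinely from density, monotonicity, and expansivity.
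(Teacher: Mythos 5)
The paper does not prove this statement at all; it is quoted from Block and Coven \cite[Theorem 2.1]{BlCo87}, so there is no in-paper proof to compare yours against, and I judge the proposal on its own terms. Your architecture is the standard and correct one: for ``only if'', transport of the partition by $h$; for ``if'', refine $P_f$ by preimages, get mesh tending to zero from expansivity, build an order isomorphism between the dense endpoint sets $D_f$ and $D_g$, extend it to an increasing homeomorphism, and obtain the conjugacy relation by continuity. The genuine gap --- which you flag yourself --- is that the one claim carrying all the content is never proved: that the left-to-right order of the admissible length-$(n+1)$ itinerary words is a function of $f^*$ alone. The bijections $\theta_n$, the compatibility $\theta_n\circ f=g\circ\theta_{n+1}$, and the conclusion $h(P_f)=P_g$ are all routed through exactly this claim, so as written you have established the easy direction plus a reduction of the hard direction to an unproven combinatorial lemma.

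That lemma is true and needs only a short induction, which must be on the page. For $n=0$ the intervals $A_0<\cdots<A_{N-1}$ are ordered by their index. For the inductive step, note first that all the needed data are functions of $f^*$: since $f$ is strictly monotone on $A_i$, $f(A_i)=[x_{\min(f^*(i),f^*(i+1))},x_{\max(f^*(i),f^*(i+1))}]$, so $A_j\subset f(A_i)$ iff $\min(f^*(i),f^*(i+1))\le j<\max(f^*(i),f^*(i+1))$ (this governs which words are admissible, and that each admissible word corresponds to exactly one interval, because $f\upharpoonright A_{i_0}$ is a monotone homeomorphism onto a Markov union of laps), and $f$ is increasing on $A_{i_0}$ iff $f^*(i_0+1)>f^*(i_0)$. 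Then two level-$(n+1)$ intervals with distinct first symbols are ordered by those symbols, while two intervals inside the same $A_{i_0}$ are carried homeomorphically by $f$ onto level-$n$ intervals and hence inherit the level-$n$ order when $f^*(i_0+1)>f^*(i_0)$ and the reversed order otherwise; by induction the order at every level is determined by $f^*$, and the same recursion for $g$ (using $g^*=f^*$) produces the identical order, together with the endpoint equivariance that your intertwining step uses. Separately, a small repair is needed in ``only if'': an increasing conjugacy sends turning points to turning points, but it need not send derivative-discontinuity points to derivative-discontinuity points, which the paper's definition of an admissible partition set requires; so rather than appealing to Remark \ref{r:1} (which only provides preimage refinements), take $P_g':=h(P_f)\cup P_g$ and $P_f':=P_f\cup h^{-1}(P_g)$, which are finite, forward invariant, admissible for $g$ and $f$ respectively, and satisfy $h(P_f')=P_g'$.
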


The next part of this paragraph will be devoted to the strong mixing maps in $C(\lambda)$. We start with one useful lemma.

\begin{lemma}\label{l:3}Let $f$ be from $PAM(\lambda)_{leo}$.  For each $\eps>0$ there exists a strongly mixing measure $\mu\neq\lambda$ preserved by the map $f$
and a homeomorphism
$h\colon~[0,1] \to [0,1]$ such that for $\nu=(\mu+\lambda)/2$
  $$\lambda=h^*\nu,~\text{ i.e., }g=h\circ f\circ h^{-1}\in C(\lambda)\text{ and }\vert\vert f-g\vert\vert<\eps.
  $$
\end{lemma}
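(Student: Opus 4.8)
The plan is to obtain $g$ as a conjugate of $f$ by the distribution function of a suitable $f$-invariant measure. Concretely, suppose we have already produced a strongly mixing, non-atomic, $f$-invariant Borel probability measure $\mu\neq\lambda$. Put $\nu:=(\mu+\lambda)/2$. Then $\nu$ is $f$-invariant, non-atomic (both $\mu$ and $\lambda$ are), and of full support (because $\lambda\le 2\nu$). Hence, exactly as in the introduction, the distribution function $h(x):=\nu([0,x])$ is a homeomorphism of $[0,1]$ with $h^*\nu=\lambda$. Since $f$ preserves $\nu$,
$$g_*\lambda=(h\circ f\circ h^{-1})_*\,h_*\nu=h_*f_*\nu=h_*\nu=\lambda,$$
so $g:=h\circ f\circ h^{-1}\in C(\lambda)$. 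Thus everything reduces to constructing a $\mu$ that is both genuinely different from $\lambda$ and close enough to it that $h$ is uniformly near the identity.

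To construct $\mu$ I would use the Markov structure of $f$. By Lemma \ref{l:4}, $f$ is measure-theoretically isomorphic, through the itinerary map $\Phi$ associated to a Markov partition $\A=\{A_0,\ldots,A_{N-1}\}$, to the one-sided Markov shift with the stochastic matrix $P$ of \eqref{m:1} and stationary vector $p$ of \eqref{v:1}; under this isomorphism Lebesgue measure corresponds to the Markov measure $\mu_P$, and because $f$ is leo the matrix $P$ is irreducible and aperiodic. Since $f$ is leo it is not affine on all of $[0,1]$, so $N\ge 2$ and the irreducible aperiodic matrix $P$ is not a permutation matrix; hence some row of $P$ has at least two positive entries. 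Transferring a small amount of mass between two such entries produces a stochastic matrix $\tilde P\neq P$ with the same zero pattern as $P$, still irreducible and aperiodic, whose unique stationary vector $\tilde p$ depends continuously on $\tilde P$ by Perron--Frobenius. The associated Markov measure $\mu_{\tilde P}$ is strongly mixing by \cite[Theorem 1.31]{Wa82} and is non-atomic (cylinder measures decay since $N\ge 2$ and $P$ is aperiodic). Pulling $\mu_{\tilde P}$ back through $\Phi$ yields an $f$-invariant, strongly mixing, non-atomic measure $\mu$ on $[0,1]$; and since the transition probabilities are recovered from cylinder measures via $\tilde p_{ij}=\mu_{\tilde P}([ij])/\mu_{\tilde P}([i])$, the inequality $\tilde P\neq P$ forces $\mu\neq\lambda$.

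It remains to control the size of the perturbation. As $\tilde P\to P$ we have $\tilde p\to p$, whence every finite cylinder measure of $\mu_{\tilde P}$ converges to that of $\mu_P$; since these cylinders correspond to the intervals of the successive refinements of $\A$, it follows that $\mu\to\lambda$ weakly. The limiting distribution function $x\mapsto\lambda([0,x])=x$ is continuous, so by P\'olya's theorem the distribution functions converge uniformly, i.e.\ $\sup_x|\mu([0,x])-x|\to 0$. As $\nu([0,x])-x=\tfrac12(\mu([0,x])-x)$, this makes $\|h-\id\|_\infty$ as small as we please. Finally, substituting $y=h^{-1}(x)$ in $g=h\circ f\circ h^{-1}$,
$$\|g-f\|_\infty=\sup_y\big|h(f(y))-f(h(y))\big|\le\|h-\id\|_\infty+\omega_f\big(\|h-\id\|_\infty\big),$$
where $\omega_f$ denotes a modulus of uniform continuity of $f$; the right-hand side tends to $0$ with $\|h-\id\|_\infty$. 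Choosing the perturbation of $P$ small enough therefore yields $\|f-g\|<\eps$.

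The main obstacle is the tension between the two requirements on $\mu$: it must differ from $\lambda$ yet be so close that the conjugating homeomorphism $h$ barely moves points. The Markov perturbation resolves this cleanly, because strong mixing is automatic from irreducibility and aperiodicity---both guaranteed by the leo hypothesis---while arbitrary closeness comes for free from the continuous dependence of the stationary measure on the transition matrix, upgraded from weak to uniform convergence of distribution functions by P\'olya's theorem. A secondary point needing care is merely verifying that a nontrivial perturbation exists at all, that is, that $P$ has a row with two positive entries; this holds because an irreducible aperiodic $P$ with $N\ge 2$ cannot be a permutation matrix.
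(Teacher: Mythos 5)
Your proof is correct, but the heart of it --- the construction of a strongly mixing $f$-invariant measure $\mu\neq\lambda$ close to $\lambda$ --- takes a genuinely different route from the paper. The paper perturbs the \emph{geometry}: it moves two Markov partition points $x_\ell,x_r$ (chosen as preimages of a common point $x_i$, so the move can be compensated to stay in $C(\lambda)$) to get a connect-the-dots map $\alpha\in PAM(\lambda)_{leo}$ with the same combinatorics $f^*=\alpha^*$; the Block--Coven theorem (Theorem \ref{t:13}) then gives a conjugacy $h_1$ with $h_1(P_f)=P_\alpha$, so taking $P_f$ $\delta$-dense yields $\|h_1-\id\|<2\delta$ for free, and $\mu$ is the $h_1$-pullback of $\lambda$, inheriting strong mixing from $(\alpha,\lambda)$ via Lemma \ref{l:4}. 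You instead perturb the \emph{symbolic data}: keep $f$ and its partition fixed, shift mass within one row of the stochastic matrix $P$, and pull the perturbed Markov measure back through the itinerary map. This makes invariance, strong mixing and $\mu\neq\lambda$ essentially automatic (no rigidity theorem, no care about which partition points may legally move), but you pay analytically, needing Perron--Frobenius continuity of the stationary vector and P\'olya's theorem to upgrade cylinder-wise convergence to the uniform bound on $\|h-\id\|_\infty$ that the paper gets directly; that step also silently uses that cylinder intervals shrink, i.e.\ slope $>1$. Two points to tidy: Lemma \ref{l:4} is literally an isomorphism statement for the pair $(\lambda,\mu_P)$, so transporting $\mu_{\tilde P}$ needs the fact (established inside its proof, not in its statement) that $\Phi$ is a bimeasurable bijection off countable sets, together with non-atomicity of $\mu_{\tilde P}$; and both you and the paper implicitly assume maps in $PAM(\lambda)_{leo}$ are expanding so that Lemma \ref{l:4} applies at all. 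Your final estimate $\|g-f\|\le\|h-\id\|+\omega_f(\|h-\id\|)$ is exactly the unstated computation behind the paper's closing sentence.
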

\begin{proof} Consider the Markov partition
$$\A=\{A_0=[x_0,x_1]\le \cdots\le A_{N-1}=[x_{N-1},x_N]\}$$ for $f$, where the set $P_f=\{0=x_0<\cdots<x_N=1\}$ contains all orbits of points of discontinuity of
derivative of $f$ and of the
endpoints. Using Definition \ref{d:2} and Remark \ref{r:1} we can assume that for some $\{x_{i-1}<x_i<x_{i+1}\}\subset P_f$  there are points
$\{x_{\ell-1}<x_{\ell}<x_{\ell+1}\le
x_{r-1}<x_{r}<x_{r+1}\}\subset P_f$ such that
$$f(\{x_{\ell-1},x_{\ell+1}\})=f(\{x_{r-1},x_{r+1}\})=\{x_{i-1},x_{i+1}\},~f(x_{\ell})=f(x_r)=x_i$$
and
\begin{equation}\label{e:19}f^{-1}(x_{\ell})\cap P_f=\emptyset=f^{-1}(x_{r})\cap P_f.\end{equation}

The last conditions in (\ref{e:19}) imply that for every $A_j\in\A$ and $s\in\{\ell,r\}$

\begin{equation}\label{e:20}\emptyset\neq f(A_j)\cap (A_{s-1}\cup A_{s})^{\circ}\implies f(A_j)\supset A_{s-1}\cup A_{s},\end{equation}
where as before $J^{\circ}$ denotes the interior of an interval $J$.

In what follows we introduce a map $\alpha$ from $PAM(\lambda)_{leo}$ such that $P_{\alpha}$ differs from $P_f$ only in the points $x_{\ell},x_r$. Fix
$\delta>0$. We can consider $\delta_1\in
(0,\delta)$ and
$$P_{\alpha}=\{y_0<\cdots<y_{\ell-1}<y_{\ell}<y_{\ell+1}\le y_{r-1}<y_{r}<y_{r+1}<\dots <y_N\}
$$
satisfying
\begin{itemize}
\item $x_i=y_i$ for $i\notin \{\ell,r\}$ and $0<\vert x_{\ell}-y_{\ell}\vert<\delta_1$, $0<\vert x_{r}-y_{r}\vert<\delta_1$,
\item $\tilde\alpha(y_i)=y_j$ if and only if $f(x_i)=x_j$
\item the connect-the-dots map $\alpha$ extending $\tilde\alpha$ from the set $P_{\alpha}$ to the whole interval $[0,1]$ satisfies $\alpha\in
    PAM(\lambda)_{leo}$.
\end{itemize}
Since both maps $f$ and $\alpha$ are expanding, by (\ref{e:20}) also Markov and $f^*=\alpha^*$, from Theorem \ref{t:13} we obtain that $\alpha=h_1\circ f\circ
h_1^{-1}$ with
$h_1(P_f)=P_{\alpha}$. By Remark \ref{r:1} we can consider the set $P_f$ $\delta$-dense in $[0,1]$ hence the homeomorphism $h_1$ fulfils $0<\vert\vert
h_1-\id\vert\vert<2\delta$.

 By Lemma \ref{l:4} the map $\alpha$ with respect to $\lambda$ is measure isomorphic to a one-sided Markov shift given by the probability vector
 $q=(\lambda([y_0,y_1]),\dots,\lambda([y_{N-1},y_N]))$ and the stochastic matrix $Q=\left (q_{ij}\right )_{i,j=0}^{N-1}$, where
    \begin{equation}\label{m:11}
q_{ij}=\begin{cases}\frac{\lambda([y_{j-1},y_j])}{\lambda(\alpha([y_{i-1},y_i]))},&\alpha([y_{i-1},y_i])\supset [y_{j-1},y_j] \\ 0,&\text{otherwise}\end{cases}.
\end{equation}

Since $\alpha=h_1\circ f\circ h_1^{-1}$, the measure $\mu\neq\lambda$ (for $h_1(y_{\ell})\neq x_{\ell}$) given by $\lambda=h_1^*\mu$ is a strongly mixing measure
preserved by the map $f$. It
follows that the measure $\nu=\frac{\mu+\lambda}{2}$, as a convex combination of two strongly mixing measures, is a nonergodic measure with $\supp~\nu=[0,1]$ and
preserved by the map $f$. Let
us consider a homeomorphism $h\colon~[0,1]\to [0,1]$ defined by
$$\lambda=h^*\nu.
$$
Then from $\vert h_1(x)-x\vert<2\delta$ fulfilling for each $x\in [0,1]$ we obtain
$$
x-\delta<\nu([0,x])=h(x)=\frac{\mu([0,x])+x}{2}=\frac{h_1(x)+x}{2}<x+\delta,
$$
i.e., $\vert\vert h-\id\vert\vert<\delta$. Now, taking $\delta$ sufficiently small we obtain
 $$g=h\circ f\circ h^{-1}\in C(\lambda)\text{ and }\vert\vert f-g\vert\vert<\eps.
  $$
\end{proof}

\begin{theorem}\label{t:8}The set of all strongly mixing maps in $C(\lambda)$ is of the first category.\end{theorem}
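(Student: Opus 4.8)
The plan is to show that the strongly mixing maps form a set of first category by exhibiting, for each candidate, arbitrarily close maps that fail to be strongly mixing, and doing so in a way that is robust under small perturbations. The natural strategy mirrors Rohlin's classical argument: strong mixing requires that the correlation $\lambda(f^{-n}(A)\cap B)$ converges to $\lambda(A)\lambda(B)$ for \emph{all} $n$ beyond some threshold, so I would construct a dense family of ``obstructions'' --- maps together with witnessing sets and times --- such that any map sufficiently close to an obstruction has $\lambda(f^{-n}(A)\cap B)$ bounded away from $\lambda(A)\lambda(B)$ for infinitely many $n$, and hence fails strong mixing.

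First I would reduce the problem to exhibiting, for each $k\in\N$, an open dense subset $\mathcal{U}_k$ of $C(\lambda)$ consisting of maps that are demonstrably not strongly mixing in a quantitative way, and then observe that $C(\lambda)_{smix}\subset C(\lambda)\setminus\bigcap_k\mathcal{U}_k$, which is of the first category. The key input is Lemma~\ref{l:3}: given any $f\in PAM(\lambda)_{leo}$ and any $\eps>0$, there is a strongly mixing measure $\mu\neq\lambda$ and a homeomorphism $h$ with $\|h-\id\|$ small so that $g=h\circ f\circ h^{-1}\in C(\lambda)$ is $\eps$-close to $f$ and preserves the \emph{nonergodic} measure $\nu=(\mu+\lambda)/2$ with full support. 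Since $g$ preserves a nonergodic measure with the same support, $g$ cannot be strongly mixing with respect to $\lambda$ --- more precisely, the decomposition $\nu=(\mu+\lambda)/2$ produces two sets whose correlations under $g$ do not decorrelate. Combined with the density of $PAM(\lambda)_{leo}$ from Proposition~\ref{p:6}, this gives a dense set of non-strongly-mixing maps.

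The main obstacle is \emph{openness}: being non-strongly-mixing is not obviously an open condition, so I must upgrade the density statement to produce, for each approximating map $g$ as above, an open neighborhood on which strong mixing fails. Here I would extract from the nonergodicity of $\nu$ a pair of Borel sets $A,B$ and a positive constant $c$ such that $|\lambda(g^{-n}(A)\cap B)-\lambda(A)\lambda(B)|\ge c$ for infinitely many $n$, and then argue --- using the Markov structure of $g$ and the continuity of the finitely many relevant iterates --- that this quantitative failure persists, for a fixed finite window of times, under any perturbation within some $\eps'$-ball. Taking $A,B$ to be finite unions of the Markov intervals (so that the relevant preimages depend only on finitely many slopes and breakpoints) makes the correlation $\lambda(g^{-n}(A)\cap B)$ a locally continuous function of the map in the uniform topology for each fixed $n$, which yields the desired open set. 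Assembling these open dense sets over a countable quantification of $c$ and the witnessing time then produces a residual set of non-strongly-mixing maps, showing $C(\lambda)_{smix}$ is of the first category.
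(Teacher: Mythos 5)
Your opening move is the same as the paper's: use Proposition~\ref{p:6} and Lemma~\ref{l:3} to produce a dense family of perturbations $g=h\circ f\circ h^{-1}$ whose failure of mixing is witnessed by the decomposition $\nu=(\mu+\lambda)/2$. (One imprecision there: ``$g$ preserves a nonergodic fully supported measure, hence is not strongly mixing for $\lambda$'' is not a valid inference --- every map in $C(\lambda)$ preserves such measures, e.g.\ averages of $\lambda$ with atomic measures on periodic orbits. The correct point is that $\lambda$ itself is nonergodic for $g$, being the average of the two mutually singular $g$-invariant measures $h^*\mu$ and $h^*\lambda$.) The first genuine gap is your claim that the witnessing sets can be taken to be finite unions of Markov intervals. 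In Lemma~\ref{l:3} one has $\mu([0,x])=h_1(x)$ with $\Vert h_1-\id\Vert<2\delta$, so $\vert\mu(I)-\lambda(I)\vert<4\delta$ for \emph{every} interval $I$; using that $f$ is strongly mixing for both $\mu$ and $\lambda$, one computes for intervals $I,J$ that $\lim_k\big(\lambda(g^{-k}(I)\cap J)-\lambda(I)\lambda(J)\big)=\tfrac14\big(\mu(h^{-1}I)-\lambda(h^{-1}I)\big)\big(\mu(h^{-1}J)-\lambda(h^{-1}J)\big)$, which is $O(\delta^2)$. Hence any constant $c$ obtainable from interval witnesses degenerates as the perturbation gets small, and no uniform gap exists along the dense family. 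The only witnesses with a gap bounded below (namely $1/4$, at all times $k$) are the singular sets $h(A)$ with $\lambda(A)=1$, $\mu(A)=0$ --- exactly the sets the paper uses. Note also that continuity is not the obstacle you think: since all maps preserve $\lambda$, Koopman operators vary strongly continuously with the map, so $f\mapsto\lambda(f^{-k}(A)\cap B)$ is continuous for \emph{arbitrary} fixed Borel $A,B$; this is why the sets $F_{\varepsilon,A,B,k}$ in \eqref{e:21} are closed.

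The second gap is in the assembly. In $\bigcap_k\mathcal U_k$ the witnessing pair is allowed to change with $k$, and the resulting certificate --- ``for every $k$ there exist sets $A_k,B_k$ with a correlation gap at some time beyond $k$'' --- is satisfied by \emph{every} map in $C(\lambda)$, mixing or not: for any $f$ and any $k$, the pair $(A,f^{-k}(A))$ has gap exactly $\lambda(A)(1-\lambda(A))$ at time $k$, since $\lambda(f^{-k}(A)\cap f^{-k}(A))=\lambda(A)$. Strong mixing fails only when one and the same pair exhibits gaps at infinitely many times, and nothing in your construction forces the pair to recur; so the inclusion $C(\lambda)_{smix}\subset C(\lambda)\setminus\bigcap_k\mathcal U_k$ would simply be false for the sets you can actually build. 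This quantifier problem --- a category argument needs countably many \emph{fixed} nowhere dense closed sets, while the natural witnesses depend on the perturbation --- is the real difficulty of Theorem~\ref{t:8}; the paper's proof is organized around it, covering $C(\lambda)_{smix}$ by the closed sets of \eqref{e:21} attached to a single Borel set $A$ chosen with $\lambda(A)=1$ and $\mu_{j,m}(A)=0$ simultaneously for all $j,m$, together with its images $h_{j,m}(A)$ (and even there, the concluding nowhere-density step is exactly the delicate point).
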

\begin{proof}As before we denote $C(\lambda)_{smix}$ the set of all strongly mixing maps in $C(\lambda)$. Then
\begin{align}\label{e:21}C(\lambda)_{smix}=\bigcap_{\eps>0}\bigcap_{A,B\in\B}\bigcup_{n\ge 1}\bigcap_{k\ge n}F_{\eps,A,B,k}
\end{align}
where
$$F_{\eps,A,B,k}=\{f \in C(\lambda) \colon~\vert\lambda(f^{-k}(A)\cap B)-\lambda(A)\lambda(B)\vert\le \eps\}.
$$
is a closed set for each pair $A,B\in\B$. Using Propositions \ref{p:6} and Corollary \ref{p:4} we can consider a dense sequence $\{f_j\}_j$ of piecewise affine
leo, strongly mixing maps in
$C(\lambda)$.

For a positive sequence $\{\eps_m\}_{m}$ converging to $0$ and a map $f_j$ let us denote $\mu_{j,m}$, $h_{j,m}$, $\nu_{j,m}=(\mu_{j,m}+\lambda)/2$ all objects
guaranteed in Lemma
\ref{l:3}(ii) for $f=f_j$ and $\eps=\eps_m$.  Since each $\mu_{j,m}$ is orthogonal to $\lambda$, there is a Borel set $A_{j,m}$ satisfying $\lambda(A_{j,m})=1$
and $\mu_{j,m}(A_{j,m})=0$. Put
$A=\bigcap_{j,m}A_{j,m}$. Then $\lambda(A)=1$ and we can write for the map  $g_{j,m}=h_{j,m}\circ f_j\circ h_{j,m}^{-1}\in C(\lambda)$ and each $k\in\N$
\begin{align}&
\vert\lambda(g_{j,m}^{-k}(h_{j,m}(A))\cap h_{j,m}(A))-\lambda(h_{j,m}(A))\lambda(h_{j,m}(A))\vert=\nonumber\\
=&\vert\nu_{j,m}(f_j^{-k}(A)\cap A)-\nu_{j,m}(A)\nu_{j,m}(A)\vert=\vert\frac12\lambda(A)-\frac12\lambda(A)\frac12\lambda(A)\vert=\frac14.
\end{align}
It shows that the closed set $F_{1/5,A,A,k}$ is nowhere dense for each $k$ hence by (\ref{e:21}) the set $C(\lambda)_{smix}$ is of the first category in
$C(\lambda)$.
\end{proof}

The following theorem states a general result analogous to one of V. Jarn\'\i k \cite{Ja33}.  Recall that by  {\it a knot point} of function $f$ we mean a point
$x$ where
$D^{+}f(x)=D^{-}f(x)=\infty$ and $D_{+}f(x)=D_{-}f(x)=-\infty$.

\begin{theorem}\label{t:1}\cite{Bo91}~$C(\lambda)$-typical function has a knot point at $\lambda$-almost every point.
\end{theorem}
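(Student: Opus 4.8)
The plan is to run a Baire-category argument whose building blocks are the regular $m$-fold window perturbations of Definition \ref{e:1}. First I would recast the knot-point condition quantitatively. For positive integers $M,k$ and $f\in C(\lambda)$, let $E_{M,k}(f)$ be the set of $x\in(0,1)$ for which there exist $h_1,h_2,h_3,h_4\in(0,1/k)$ with $f(x+h_1)-f(x)>Mh_1$, $f(x+h_2)-f(x)<-Mh_2$, $f(x-h_3)-f(x)>Mh_3$ and $f(x-h_4)-f(x)<-Mh_4$; these four inequalities force, in the limit, $D^{+}f(x)=D^{-}f(x)=\infty$ and $D_{+}f(x)=D_{-}f(x)=-\infty$. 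A point $x$ is a knot point of $f$ precisely when $x\in E_{M,k}(f)$ for all $M,k$, so $f$ has a knot point at $\lambda$-a.e.\ point if and only if $\lambda(E_{M,k}(f))>1-1/j$ for all $M,k,j\in\N$. Accordingly I would set $\O_{M,k,j}:=\inter\{f\in C(\lambda):\lambda(E_{M,k}(f))>1-1/j\}$, the interior taken with respect to $\rho$, so that each $\O_{M,k,j}$ is open by fiat and $\bigcap_{M,k,j}\O_{M,k,j}$ is a $G_\delta$ every element of which has a knot point almost everywhere. It then suffices to prove each $\O_{M,k,j}$ is dense.

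For density I would fix $f\in C(\lambda)$, $\eps>0$ and $M,k,j$, take a uniform grid $I_\ell=[\ell/n,(\ell+1)/n]$ with $n>n_0(\eps)$ from Lemma \ref{l:2}, and let $g$ be the regular $m$-fold window perturbation of $f$ on this grid with odd $m$ to be fixed large; then $\rho(f,g)<\eps$ independently of $m$ by Lemma \ref{l:2}. The crucial geometric feature is that on each $I_\ell$ the map $g$ consists of $m$ affinely rescaled copies of $f\upharpoonright I_\ell$, alternately increasing and decreasing, each a full copy sweeping the entire value range $[v_\ell,V_\ell]$, where $v_\ell=\min_{I_\ell}f$ and $V_\ell=\max_{I_\ell}f$, over horizontal length $1/(nm)$. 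Since a non-atomic invariant measure forbids $f$ from being locally constant, $V_\ell>v_\ell$ on every interval, so the folds genuinely oscillate; and for every $x\in I_\ell$ there is, within horizontal distance $2/(nm)$ on each side, a point where $g$ attains $V_\ell$ and one where $g$ attains $v_\ell$.

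Now set $\eta:=1/(4jn)$ and call $x\in I_\ell$ good if $g(x)\in[v_\ell+\eta,V_\ell-\eta]$. For a good $x$ the witnesses where $g$ equals $V_\ell$ and $v_\ell$ give, on each side, secant numerators of absolute value at least $\eta$ over denominators at most $2/(nm)$, hence secant slopes of magnitude at least $\eta nm/2=m/(8j)$, which exceeds $M$ once $m\ge 16Mj$ and $2/(nm)<1/k$; thus every good $x$ lies in $E_{M,k}(g)$. Because $g\upharpoonright I_\ell$ is $\lambda$-equivalent to $f\upharpoonright I_\ell$, the distribution of the values of $g$ on $I_\ell$ equals that of $f$, so the bad set $\{x\in I_\ell:g(x)\notin[v_\ell+\eta,V_\ell-\eta]\}$ has measure at most $\lambda(f^{-1}(\text{two intervals of total length }2\eta))\le 2\eta$; summing over the $n$ intervals gives total bad measure at most $2n\eta=1/(2j)<1/j$, whence $\lambda(E_{M,k}(g))>1-1/j$. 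Finally, since on the good set the witness numerators exceed $\eta$ and the denominators are at most $2/(nm)$, any $g'$ with $\rho(g,g')<\eta/4$ still satisfies all four defining inequalities at every good $x$ (the numerators drop by at most $2\rho(g,g')<\eta/2$, leaving secants of magnitude at least $\eta nm/4\ge M$). Hence $g$ lies in the interior, i.e.\ $g\in\O_{M,k,j}$, and density follows; Baire's theorem then finishes the proof.

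The main obstacle is the tension between the two smallness requirements: to make secant slopes large one needs the witness abscissae small, but then a uniform perturbation of $g$ changes a secant by $2\rho(g,g')/h_i$, threatening openness. The window-perturbation mechanism resolves this precisely because it keeps the numerators bounded below by a fixed $\eta$ while the denominators stay comparable to $1/(nm)$, producing secants of order $\eta nm$ with a built-in margin; an arbitrary oscillation would not guarantee this. The second delicate point is the simultaneous bookkeeping of constants: $\eta$ must be small against $n$ (to shrink the bad set below $1/j$) yet large against $\rho(g,g')$ (for robustness), and $m$ large against both $M$ and $1/\eta$. Checking that these choices are consistent, and handling the boundary folds of each $I_\ell$ where a one-sided $V_\ell$- or $v_\ell$-witness would fall in a neighbouring interval (a set of measure at most $1/m$, absorbed into the bad set for $m$ large), is where the real care is required.
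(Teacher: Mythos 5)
The paper does not actually prove this statement: Theorem \ref{t:1} is quoted wholesale from \cite{Bo91}, so there is no internal argument to compare against, and your proposal has to be judged as a self-contained proof. Judged that way, it is correct. The skeleton is sound: each $E_{M,k}(f)$ is open (fix witnesses and use continuity of $f$ in $x$, shrinking a witness slightly if it abuts an endpoint), a point is a knot point iff it lies in $E_{M,k}(f)$ for all $M,k$, and passing to $\O_{M,k,j}=\inter\{f\colon\lambda(E_{M,k}(f))>1-1/j\}$ makes the $G_\delta$ structure trivial while reducing everything to density-with-margin. The quantitative steps also check out. The bad-set bound $\lambda\{x\in I_\ell\colon f(x)\in A_\ell\}\le\lambda(f^{-1}(A_\ell))=\lambda(A_\ell)=2\eta$ legitimately uses measure preservation of the global map $f$, and summing gives $2n\eta=1/(2j)$; implicitly this also forces the oscillation $V_\ell-v_\ell\ge 1/n>2\eta$, so the good sets are genuinely most of each $I_\ell$. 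At a good point the fold geometry gives, on each side, witnesses with numerator at least $\eta$ over abscissa at most $2/(nm)$, hence secants of size $m/(8j)>M$ for $m\ge 16Mj$ with all witnesses inside $(0,1/k)$ once $2/(nm)<1/k$; and the margin survives any $g'$ with $\rho(g,g')<\eta/4$, which is exactly what membership in the interior requires. Two bookkeeping corrections, neither fatal: the exceptional set of points lacking a complete fold on one side consists of the first and last fold of each $I_\ell$, total measure $2/m$ rather than $1/m$, absorbed once $m>4j$; and the oddness of $m$ (which you do impose) is what keeps the perturbation in $C_*(f;I_\ell)$ and hence $g$ in $C(\lambda)$. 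Your central device --- numerators bounded below by a fixed $\eta$ played against denominators of order $1/(nm)$, so that large secant slopes persist under uniform perturbations --- is exactly the kind of robust-oscillation mechanism behind \cite{Bo91} and Jarn\'\i k's classical theorem \cite{Ja33}; what your write-up buys is that the theorem follows entirely from the paper's own toolkit, namely the regular $m$-fold window perturbations of Definition \ref{e:1} and Lemma \ref{l:2}, with no external input.
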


\begin{corollary} \label{cor:onto} The $C(\lambda)$-typical function maps a  set of Lebesgue measure zero  onto $[0,1]$. \end{corollary}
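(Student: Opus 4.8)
The plan is to use Theorem~\ref{t:1} directly and to produce the required null set explicitly. For the typical $f$ the set $K$ of knot points satisfies $\lambda(K)=1$, so the set of non-knot points $[0,1]\setminus K$ is Lebesgue-null. I would take
$$Z:=\big([0,1]\setminus K\big)\cup\{0,1\},$$
which still has $\lambda(Z)=0$, and prove that $f(Z)=[0,1]$; since $\lambda(Z)=0$ this is exactly the assertion. The key idea is that every value of $f$ is attained at a point that is \emph{provably} not a knot point, namely at an \emph{extremal} preimage, so that point lies in $Z$.

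First I would note that $f$ is surjective by Proposition~\ref{p:5}(ix), so $f^{-1}(v)$ is a nonempty compact set for every $v\in[0,1]$ and $\xi(v):=\max f^{-1}(v)$ is well defined. Fix $v\notin\{f(0),f(1)\}$; then $0,1\notin f^{-1}(v)$, hence $\xi:=\xi(v)\in(0,1)$. By maximality of $\xi$ we have $f(x)\neq v$ for every $x\in(\xi,1]$, and since $f$ is continuous the sign of $f-v$ is constant on $(\xi,1]$. If $f>v$ there, then $f(\xi+t)-f(\xi)>0$ for all small $t>0$, so the lower right Dini derivate satisfies $D_{+}f(\xi)\ge 0$; if instead $f<v$ there, then $f(\xi+t)-f(\xi)<0$ for all small $t>0$, so the upper right Dini derivate satisfies $D^{+}f(\xi)\le 0$. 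In either case one of the four Dini derivates at $\xi$ violates the requirement $D_{+}f(\xi)=-\infty$, resp.\ $D^{+}f(\xi)=+\infty$, that defines a knot point; hence $\xi\notin K$, so $\xi\in Z$, and therefore $v=f(\xi)\in f(Z)$. The at most two remaining values $f(0)$ and $f(1)$ lie in $f(\{0,1\})\subset f(Z)$ by the very definition of $Z$, and combining the two cases gives $f(Z)=[0,1]$.

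Since the argument is short, there is no serious computational obstacle; the one point that must be handled with care is the bookkeeping of which one-sided Dini derivate is controlled, which depends on whether $f$ lies above or below $v$ just to the right of $\xi$ (and dually, one could equally use $\min f^{-1}(v)$ and the left derivates). The only genuinely exceptional behaviour occurs when the extremal preimage is an endpoint, i.e.\ for $v=f(0)$ or $v=f(1)$, where the interval $(\xi,1]$ degenerates and the derivate argument is vacuous; this is precisely why the two points $\{0,1\}$ are adjoined to $Z$, at no cost to its measure. Everything else is a direct consequence of Theorem~\ref{t:1} (which makes $[0,1]\setminus K$ null) and of the surjectivity of $f$.
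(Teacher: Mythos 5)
Your proof is correct and takes essentially the same route as the paper: the paper's own argument likewise picks the maximum of each level set and observes that it cannot be a knot point, so the complement of the knot-point set, which is Lebesgue-null by Theorem~\ref{t:1}, maps onto $[0,1]$. Your write-up simply makes explicit the Dini-derivate bookkeeping and the endpoint caveat (adjoining $\{0,1\}$ to the null set) that the paper's one-line proof leaves implicit.
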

\begin{proof}  Let $K$ be the set of knot points of $f$.  Each level set contains its maximum, it cannot be a knot point.
Thus $f(K^c) = [0,1]$. \end{proof}

It is an interesting question if functions from $C(\lambda)$ with knot points $\lambda$-almost everywhere have infinite topological entropy.

\section{Metric entropy in $C(\lambda)$}
We start this section by an easy application of the Rohlin entropy formula
(see for example Theorem 1.9.7 in \cite{PrUr10}).
\begin{lemma}\label{l:5}Let $f$ be from $PA(\lambda)$. Then
$$h_{\lambda}(f)=\int_{0}^1\log\vert f'(x)\vert~\d\lambda(x).
$$
\end{lemma}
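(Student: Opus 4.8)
The plan is to apply the Rohlin entropy formula to the piecewise affine $\lambda$-preserving map $f$. The statement is that for $f \in PA(\lambda)$,
$$h_{\lambda}(f)=\int_{0}^1\log\vert f'(x)\vert~\d\lambda(x).$$

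First I would recall the Rohlin formula itself. For a measure-preserving system $([0,1],\B,\lambda,f)$ where $f$ is piecewise monotone and expanding-type (or more generally, where $f$ admits a finite generating partition into intervals of monotonicity), the Rohlin formula states that the metric entropy equals the integral of the logarithm of the Jacobian. For an interval map preserving $\lambda$, the relevant Jacobian is precisely $\vert f'\vert$: on each piece where $f$ is affine and monotone, the local expansion factor is the absolute slope, and since $f$ preserves Lebesgue measure the sum of the reciprocal slopes over the preimages of a point is $1$, confirming that $\vert f'\vert$ is indeed the Jacobian $\frac{\d \lambda \circ f}{\d\lambda}$ in the appropriate sense. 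The cited Theorem 1.9.7 in \cite{PrUr10} gives exactly the needed hypotheses for this formula to hold.

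The key steps in order are: verify that $f \in PA(\lambda)$ is piecewise monotone with finitely many branches, so the partition $\P$ into maximal intervals of monotonicity is finite; check that $\P$ is a one-sided generating partition (this follows because $f$ is piecewise affine with $\vert f'\vert$ constant on each piece, and the Markov/expanding structure developed earlier guarantees that refinements $\bigvee_{k=0}^{n-1} f^{-k}\P$ shrink to points, so $\P$ generates the Borel $\sigma$-algebra up to $\lambda$-null sets); identify the Jacobian of $f$ with respect to $\lambda$ as $\vert f'\vert$ on the interior of each monotonicity piece, using the $\lambda$-invariance to confirm the Jacobian identity $\sum_{y \in f^{-1}(x)} \frac{1}{\vert f'(y)\vert}=1$; and finally invoke the Rohlin formula to conclude $h_\lambda(f)=\int \log\vert f'\vert\,\d\lambda$.

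The main obstacle I expect is the verification of the generating property of the monotonicity partition, since a general piecewise affine $\lambda$-preserving map need not be expanding (it may have affine pieces of slope of absolute value less than one, as long as the preimage-reciprocal-slope sum condition holds). In that non-expanding case one must be careful: either argue that the partition into intervals of monotonicity is still generating because distinct points eventually separate under iteration (which can fail if $f$ has a branch of slope $\pm 1$ on a subinterval that is never cut by preimages of endpoints), or appeal to the fact that the Rohlin formula as stated in \cite{PrUr10} applies under hypotheses that $f\in PA(\lambda)$ satisfies regardless. I would resolve this by noting that since $f$ is piecewise affine, $\log\vert f'\vert$ is a bounded measurable function and the Rohlin formula holds for any measure-preserving piecewise monotone interval map with respect to the partition into laps, so the conclusion follows directly from the cited theorem applied to the finite lap-partition of $f$.
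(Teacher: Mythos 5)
Your overall strategy is the same as the paper's, which offers nothing beyond the citation of Theorem 1.9.7 of \cite{PrUr10}; the substance of a proof therefore lies entirely in the hypothesis-checking, and it is exactly there that your proposal has problems. First, a factual error: a map in $PA(\lambda)$ can \emph{never} have an affine piece with slope of absolute value less than one. If $f$ were affine on a lap $L$ with $0<\vert f'\vert<1$ there, then for every Borel set $A\subset f(L)$ the single branch of $f^{-1}$ landing in $L$ already gives $\lambda(f^{-1}(A))\ge \lambda(A)/\vert f'\upharpoonright L\vert>\lambda(A)$, contradicting $\lambda$-preservation. So every lap has slope of modulus at least $1$ --- a fact the paper itself states and uses in Step III of the proof that $PAM(\lambda)_{entr=c}$ is dense --- and the only possible degeneracy is slope exactly $\pm 1$.

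Second, and this is the genuine gap: slope-$\pm1$ laps do occur (glue $\id$ on $[0,\frac12]$ to a full tent mapping $[\frac12,1]$ onto itself; this map lies in $PA(\lambda)$), and for such maps the lap partition really can fail to be one-sided generating --- in this example no two points of $(0,\frac12)$ are ever separated by any refinement $\bigvee_{k<n}f^{-k}\P$. You correctly identify this obstacle, but your resolution, namely that ``the Rohlin formula holds for any measure-preserving piecewise monotone interval map with respect to the partition into laps,'' is precisely the statement to be proved, asserted without justification; the Rohlin formula genuinely needs a generating (or expansion/exactness) hypothesis, since without one equality can fail --- any invertible transformation with positive entropy has Jacobian $\equiv 1$, so ``entropy $=\int\log J$'' is false in general. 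A correct repair has real content: for instance, let $Z$ be the set of points whose entire forward orbit stays in slope-$\pm1$ laps; $Z$ is completely invariant mod $0$, the identity $\sum_{y\in f^{-1}(x)}1/\vert f'(y)\vert=1$ forces $f\upharpoonright Z$ to be injective mod $0$, hence an a.e.\ finite-piece isometry of zero entropy, matching $\int_Z\log\vert f'\vert\,\d\lambda=0$; on the complement the Birkhoff averages of $\log\vert f'\vert\ge0$ are a.e.\ strictly positive, so there the cylinders of the lap partition shrink to points, the partition generates, and Rohlin's formula applies; one then concludes by affinity of entropy over this invariant decomposition. (Alternatively one can combine the general inequality $h_\lambda(f)\ge\int\log J_\lambda f\,\d\lambda$ for countable-to-one endomorphisms with a Ruelle--Margulis-type upper bound for piecewise monotone maps.) Note also that every actual application of Lemma \ref{l:5} in the paper is to maps in $PA(\lambda)_{slope>1}$ or $PAM(\lambda)_{slope>1}$, for which your generating argument is complete as written; the subtlety arises only because the lemma is stated for all of $PA(\lambda)$.
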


It follows from \cite[Corollary 4.14.3]{Wa82} that if $f\in C(\lambda)\setminus\{\id,1-\id\}$ then $h_{\lambda}(f)>0$. Analogously as before, for $c\in
(0,\infty]$ and $X\subset C(\lambda)$
we denote by $X_{entr<c}$, resp.  $X_{entr=c}$ the set of all maps $f$ from $X$ for which $h_{\lambda}(f)<c$, resp. $h_{\lambda}(f)=c$.

\begin{proposition}
For every $c\in (0,\infty)$ the set $PAM(\lambda)_{entr=c}$ is dense in $C(\lambda)$.
\end{proposition}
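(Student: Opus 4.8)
The plan is to first reduce to a convenient approximant and then to tune the metric entropy to the exact value $c$ by a window perturbation whose effect on entropy is computed by Lemma \ref{l:5}. Fix $f\in C(\lambda)$, $\eps>0$ and the target $c\in(0,\infty)$. By Proposition \ref{p:6} I may start from a map $g_0\in PAM(\lambda)_{leo}$ with $\rho(f,g_0)<\eps/2$; write $c_0=h_\lambda(g_0)=\int_0^1\log|g_0'|\,\d\lambda$. The heart of the argument is a single tunable gadget: on a small interval $[a,b]$ on which the current map is affine and increasing with slope $s$ onto $[c',d']=[g_0(a),g_0(b)]$ (so $d'-c'=s(b-a)$), and which is a Markov interval (its endpoints eventually periodic, chosen around a periodic orbit exactly as in the proof of Proposition \ref{p:6}), I replace $g_0\upharpoonright[a,b]$ by a \emph{full-height zigzag}: a piecewise affine map with an odd number $m$ of monotone laps of lengths $\ell_1,\dots,\ell_m$ (so $\sum_j\ell_j=b-a$), each lap mapping affinely and onto $[c',d']$, alternating up and down and ending upward so that the endpoint values $c',d'$ are preserved.

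Each such zigzag is $\lambda$-equivalent to $g_0\upharpoonright[a,b]$: every lap has $1/|{\rm slope}|=\ell_j/(d'-c')$, so the total weight over each value is $\sum_j\ell_j/(d'-c')=(b-a)/(d'-c')=1/s$, the same as for the single branch it replaces. Hence the resulting map $g$ lies in $C(\lambda)$, it is $(\eps/2)$-close to $g_0$ by Lemma \ref{l:2} (the interval is small and the endpoints are unchanged), and it stays in $PAM(\lambda)$ because the new breakpoints are the creases of the zigzag, each of which maps in one step to $c'$ or $d'$, points that are eventually periodic by the Markov choice of $[a,b]$. By Lemma \ref{l:5} the entropy contribution of $[a,b]$ becomes $\sum_j\ell_j\log\frac{d'-c'}{\ell_j}$, which for fixed $m$ depends continuously on $(\ell_1,\dots,\ell_m)$ and sweeps the whole interval $[(b-a)\log s,\,(b-a)\log s+(b-a)\log m]$ as the laps pass from strongly unequal (one dominant lap) to equal. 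Choosing $m$ large enough that $(b-a)\log m\ge c-c_0$ and invoking the intermediate value theorem, I obtain $g$ with $h_\lambda(g)=c_0+(c-c_0)=c$ whenever $c\ge c_0$.

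The remaining, and \emph{main, obstacle} is the case $c<c_0$: the gadget only raises entropy (the single branch already minimizes the contribution among $\lambda$-equivalent maps), so I must first replace $g_0$ by an approximant of entropy below $c$. Thus the crux is an auxiliary density statement: for every $f$ and every $\delta>0$ there is a map in $PAM(\lambda)$ that is $(\eps/2)$-close to $f$ and has entropy $<\delta$. The mechanism is that for $g\in C(\lambda)$ one has $h_\lambda(g)=\int_0^1\big(\sum_{g(x)=y}\tfrac{\log|g'(x)|}{|g'(x)|}\big)\,\d y$ while $\int_0^1\big(\sum_{g(x)=y}\tfrac1{|g'(x)|}\big)\,\d y=1$; using for almost every value $y$ one preimage of slope $\approx 1$ together with a few compensating preimages of very large slope makes the inner sum pointwise small, so the entropy is small. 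Realizing this globally while staying $\eps/2$-close to $f$ is what takes work: the near-slope-$1$ stretches must be kept short enough (length $O(\eps)$) to track $f$, the steep stretches must be interleaved with them and matched across the laps so that the identity $\sum 1/|g'|=1$ holds for every value, and the whole object must be arranged to be piecewise affine Markov. Granting such a low-entropy approximant $g_1$ with $h_\lambda(g_1)<c$, the gadget of the previous paragraph applied to $g_1$ produces a map in $PAM(\lambda)$ with entropy exactly $c$ within $\eps$ of $f$, completing the proof.
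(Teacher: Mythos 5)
Your proof has the same two-phase skeleton as the paper's: first get an approximant whose entropy lies below the target $c$, then raise the entropy to exactly $c$ by a zigzag window perturbation on a small Markov interval around a periodic orbit, using Lemma \ref{l:5} and continuity in the lap lengths (intermediate value theorem). Your second phase is correct and matches the paper's Step III almost verbatim (the paper gets the range $\left(h_{\lambda}(\tilde H),h_{\lambda}(\tilde H)+(\log m)(p'-p)\right]$ for $m$-fold perturbations on $[p,p']$; your computation $\sum_j\ell_j\log\frac{d'-c'}{\ell_j}$ is the same gadget, made explicit). You also correctly identify that the whole difficulty sits in the first phase, since the gadget only increases entropy.

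But that first phase is where your proposal stops being a proof: the auxiliary density statement --- every $f$ admits arbitrarily close maps in $PAM(\lambda)$ of arbitrarily small entropy --- is \emph{granted}, not proved. This cannot be dodged: Proposition \ref{p:6} gives no upper bound on $h_\lambda(g_0)$, every map in $C(\lambda)\setminus\{\id,1-\id\}$ has strictly positive entropy, and window perturbations only push entropy up, so for small $c$ you genuinely need a new construction. That construction is exactly what the paper spends Steps I and II on: explicit two-slope maps $H[\eta,M]$ with slopes $\frac{1}{1-\eta}$ and $\frac{m-1}{\eta}$, arranged so that almost every value has one preimage on a near-slope-$1$ piece and $m-1$ preimages on steep pieces (whence $pP=p$ and $\lambda$-preservation), with entropy $(1-\eta)\log\frac{1}{1-\eta}+\eta\log\frac{m-1}{\eta}\to 0$ as $\eta\to 0_+$; this is built first for full-lap maps, then extended to arbitrary $f\in PA(\lambda)_{slope>1}$ by decomposing along the level sets where $\card f^{-1}$ is constant, rationalizing the lap lengths so the interleaving of flat and steep pieces can be matched consistently across branches, and gluing the pieces $H^{(j)}_i$ with the right vertical and horizontal shifts; finally one makes the result Markov by further small window perturbations. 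You name precisely these difficulties (tracking $f$ with short near-slope-$1$ stretches, matching $\sum 1/|g'|=1$ across laps, Markov-ization) --- which shows you see what is needed --- but naming the obstacle and overcoming it are different things, and the overcoming is the bulk of the paper's proof. As it stands, your argument proves density of $PAM(\lambda)_{entr=c}$ only in the set of maps that already admit close Markov leo approximants of entropy below $c$, not in all of $C(\lambda)$.
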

\begin{proof} We claim  that for each $\eps>0$
\begin{equation}\label{e:25}\forall~f\in PA(\lambda)_{slope>1}~\forall~ \delta>0 \colon~B(f;\delta)\cap PA(\lambda)_{entr<\eps}\cap
PA(\lambda)_{slope>1}\neq\emptyset.\end{equation}

In order to verify (\ref{e:25}) we will proceed in several steps. In the first step we show that in $C(\lambda)$ any piecewise affine map with full
laps can be approximated by a piecewise affine map with exactly two distinct slopes and of arbitrarily small metric entropy;  in the second step we
generalize our construction to any piecewise affine map. In the third step we prove the statement of the proposition.

\noindent {\bf I.} Let $F\colon~[0,1]\to [0,1]$ be a continuous piecewise affine map with $m>1$ full laps, i.e., for which there are points
$0=x_0<x_1<\cdots<x_m=1$ such that
$F\upharpoonright [x_i,x_{x_{i+1}}]$, $i=0,\dots,m-1$, is affine and $F([x_i,x_{x_{i+1}}])=[0,1]$ for each $i$. Clearly $F\in PAM(\lambda)_{slope>1}$ and $\vert
F'(x)\vert=1/\alpha_i$ for
$\alpha_i=\lambda([x_i,x_{i+1}])$ and each $x\in (x_i,x_{i+1})$. In fact the map $F$ is uniquely determined by the $(m+1)$-tuple
$(\pm,\alpha_0,\dots,\alpha_{m-1})$ (we write $F\sim
(+,\alpha_0,\dots,\alpha_{m-1})$) satisfying
\begin{equation}\label{e:*}
\sum_{i=0}^{m-1}\alpha_i=1,
\end{equation} $\alpha_i>0$ for each $i$ and in which the first coordinate indicates if $F$ increases ($+$), resp. decreases ($-$) on the interval
$[0,\alpha_0]$. Let us assume that
$\alpha_i=p_i/q\in\Q$ for each $i$ and for $\eta>0$ and integer $M>2$ put
$$ r(\eta,M)=\frac{\eta}{M(m-1)}\text{ and }s(\eta,M)=\frac{1-\eta}{M}.
$$

 For $\eta\in (0,1)$, an $M$ divisible by $q$ where $\alpha_i=p_i/q$, for $i\in\{0,\dots,m-1\}$  define a continuous map
 $h_i=h_{i}[\eta,M]\colon~[0,Mq^{-1}\gamma_i]\to \mathbb R$, where
 $r=r(\eta,M)$, $s=s(\eta,M)$, $\gamma_i=\gamma_{i}(\eta,M)=p_is+(q-p_i)r$ and
\begin{itemize}

\item $h_i$ is affine with slope $\frac{1}{1-\eta}$ on $[q_ir,q_ir+p_is]$, where $q_i=\sum_{j\le i-1}p_j$
\item $h_i$ is affine with slope $\frac{m-1}{\eta}$ on $[0,q_ir]$ and $[q_ir+p_is,\gamma_i]$
\item $h_i(x)=h_i(x-(\ell-1)\gamma_i)+\frac{(\ell-1)q}{M}$ for $x\in [(\ell-1)\gamma_i,\ell\gamma_i]$, $1\le\ell\le Mq^{-1}$
\item $h_i(0)=0$.
\end{itemize}

We leave the straightforward verification of the following properties to the reader (see Figure \ref{fig:dendritemap5}).

\begin{itemize}
\item[(i)] $h_i(Mq^{-1}\gamma_i)=1$
\item[(ii)] $h_i$ is strictly increasing
\item[(iii)] $h_i$ is a piecewise affine map with two slopes   $\frac{m-1}{\eta}$ and $\frac{1}{1-\eta}$, the latter one on $Mq^{-1}$ pairwise disjoint closed
    intervals
\item[(iv)] $\lim_{\eta\to 0_+}Mq^{-1}\gamma_i(\eta,M)=\alpha_i$ and
$$\forall~\iota,\kappa>0~\exists~\eta',M'~\forall~\eta<\eta',M>M'\colon~\max_{x\in [x_i+\iota,x_{i+1}-\iota]}\vert F(x)-h_{i}[\eta,M](x)\vert<\kappa.
$$
\item[(v)] $Mq^{-1}\sum_{i=0}^{m-1}\gamma_i(\eta,M)=1$ for each pair $\eta,M$
\end{itemize}
Let   $H=H[\eta,M]\colon~[0,1]\to [0,1]$ be defined by (we put $\beta_i=Mq^{-1}\gamma_i(\eta,M)$)
\begin{equation*}\label{ap:1}
H(x) :=\begin{cases}h_i(x-\sum_{j\le i-1}\beta_j),\text{ for }x\in [\sum_{j\le i-1}\beta_j,\sum_{j\le i}\beta_j],~i\text{ even }\\ h_i(\sum_{j\le
i}\beta_j-x),\text{ for }x\in [\sum_{j\le
i-1}\beta_j,\sum_{j\le i}\beta_j],~i\text{ odd. }\end{cases}
\end{equation*}

Clearly $H\in PA(\lambda)_{slope>1}$ and by (iv) $$\rho(f,H[\eta,M])\to 0\text{ for }\eta\to 0_+,~M\to\infty.$$
For the metric entropy of $H$ from Lemma \ref{l:5} we obtain
\begin{align}h_{\lambda}(H)
& = \int_{0}^1\log\vert H'[\eta,M](x)\vert \, \d\lambda(x) = \sum_{i=0}^{m-1} \int_{0}^{Mq^{-1} \gamma_i} \log | h_i'| \, \d\lambda \nonumber\\ &=  Mq^{-1}
\sum_{i=0}^{m-1} \int_{0}^{\gamma_i} \log | h_i'| \, \d\lambda \nonumber\\ &=M q^{-1} \sum_{i=0}^{m-1} \left (  p_i s \log \frac{1}{1 - \eta} +( \gamma_i - p_i
s) \log \frac{m-1}{\eta}  \right )\nonumber\\
&\label{e:27}=(1-\eta)\log\frac{1}{1-\eta}+\eta\log\frac{m-1}{\eta},
\end{align}
where the last equality follows from (v), Equality \eqref{e:*} and the easily verifiable fact that $Mq^{-1}p_is = \alpha_i(1 - \eta)$.

Thus, for each $M$, for any $c \in (0,\log(m-1))$ the is an $\eta$ such that
the entropy of $H(\eta,M)$  equals $c$.

\begin{figure}[t]
\centering
\includegraphics[width=.5\textwidth]{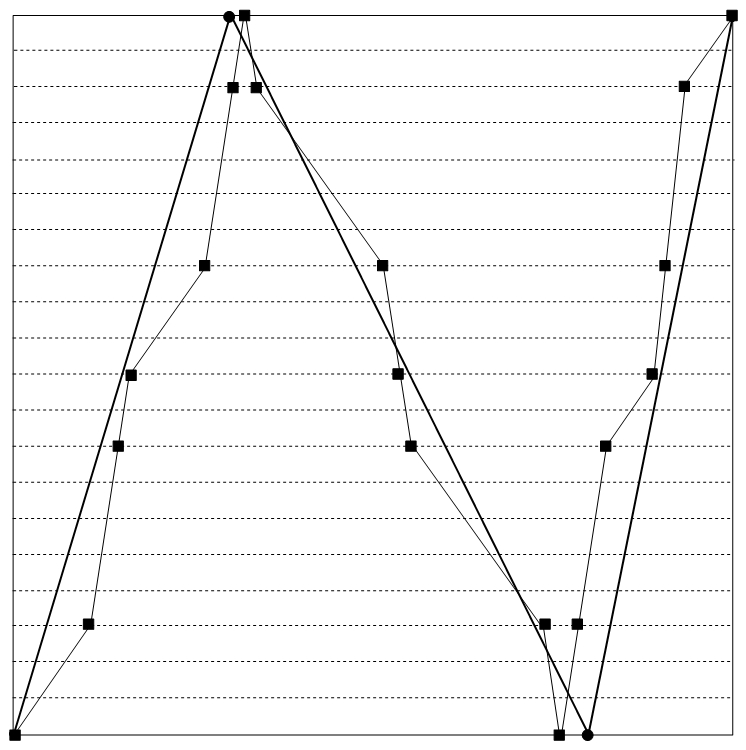}
\caption{$F\sim (+,3/10,1/2,1/5)$, $\eta=3/20$, $q=10$, $M=20$.}\label{fig:dendritemap5}
\end{figure}

\noindent {\bf II.} Fix $f\in PA(\lambda)_{slope>1}$, let $0=y_0<\cdots <y_n=1$ be such that $(y_j,y_{j+1})$, $j=0,\dots,n-1$, are the maximal open intervals on
which the map
$$\card f^{-1}\colon~[0,1]\to \N,~m_j := \card f^{-1}(y) \in \N,~y\in (y_j,y_{j+1})$$
is constant. This map is well defined since $f$ is piecewise affine with (absolute) slopes greater than $1$. Let us denote $\alpha^j_i$,
$i=0,\dots,m_j-1$, the Lebesgue measure
of the $i$th (from the left) connected components of $f^{-1}((y_j,y_{j+1}))$;
Fix the vector of $m_j$'s and $n$ as above, and consider the system of equations
($z_0 =0$, $z_n =1$, all other variables free)
$$\sum_{i=0}^{m_j-1}\beta^j_{i}=z_{j+1}-z_j \hbox{ for each } j \in \{0,\dots,n-1\}.$$
A solution of this equation is a $n -1 + \sum_{j=0}^{n-1}  m_j$-tuple.
Since $f\in C(\lambda)$, $\sum_{i=0}^{m_j-1}\alpha^j_{i}=y_{j+1}-y_j$ for each $j \in \{0,\dots,n-1\}$,
thus it is  a solution for this system.
But this system has a open set of solutions, so it has solutions (arbitrarily close to the fixed solution)  such that all the numbers $\alpha^j_i$ and  the $y_j$
are rational for all $i,j$.
Each such solution corresponds
to a map in $C(\lambda)$, thus wlog we can replace $f$ by a close ``rational'' map.

Now we  essentially repeat step I for $f$  restricted so that its image is $(y_j,y_{j+1})$. More precisely
for each $j$ we can consider the map $F^{(j)}\sim (*,\alpha^j_0,\dots,\alpha^j_{m_j-1})$
with $\sum_{i=0}^{m_j-1}\alpha^j_{i}<1$ , i.e., $$F^{(j)}\colon~[0,\sum_{i=0}^{m_j-1}\alpha^j_{i}]\to [0,\sum_{i=0}^{m_j-1}\alpha^j_{i}],~$$
where $*=+$, resp. $*=-$ if $f$ increases, resp. decreases on the leftmost connected components of $f^{-1}([y_j,y_{j+1}])$. Let $\alpha^j_i=p^j_i/q$. Using part
I  with $\eta\in (0,1)$ and
$M$ divisible by $q$, for each $j$ there is a map $H^{(j)}[\eta,M]$ that approximates (for small $\eta$ and large $M$) the map $F^{(j)}$. Moreover, each
$H^{(j)}$ is composed from
$h^{(j)}_i$, $i=0,\dots,m_j-1$ and we can use those maps to approximate the map $f$ by the uniquely determined map $H = H[\eta,M]\colon~[0,1]\to [0,1]$ in the
following way.

Let  $C^j_i$ denote the $i$th connected component of $f^{-1}([y_j,y_{j+1}])$.
 Remember from step I that the graph of $H^{(j)}$ is produced by gluing various horizontally
shifted  copies of $h^{(j)}_i$.
The graph of $H$ is produced by gluing various copies of the same pieces, but with different vertical  (with respect to $j$)
and horizontal (with respect to $C_i^j)$ shifts.  More precisely
the copies of $H_i^{(j)} := h^{(j)}_i + y_j$ are glued in the same combinatorial order of the interval $C^j_i$, i.e.,
 the rightmost value (either $y_j$ or $y_{j+1}$) of preceding $H^{(j)}_i$ coincides with the leftmost value of the following $H^{(j')}_{i'}$.

Note that  $H[\eta,M]\in PA(\lambda)_{slope>1}$, Equality (\ref{e:27}) holds,
$$h_{\lambda}(H[\eta,M])\to 0 \hbox{ for }\eta\to 0_+$$
and
$$\rho(f,H[\eta,M])\to 0 \hbox{ for } \eta\to 0_+ \hbox{ and } M\to\infty.$$

As in part I, for each $M$, for any $c \in (0,\log(m-1))$ there is an $\eta$ such that
the entropy of $H(\eta,M)$  equals $c$.
In particular,  the proof of (\ref{e:25}) is finished.

\noindent {\bf III.} Fix a map $f\in PA(\lambda)_{slope>1}$ and $\eps$ and $\delta$ positive. Using
part II, for a sufficiently small $\eta$ and large $M$, the map $H=H[\eta,M]\in PA(\lambda)_{slope>1}$ satisfies
$$\rho(f,H)<\delta/2,~\int_{0}^1\log\vert H'(x)\vert~\d\lambda(x)=(1-\eta)\log\frac{1}{1-\eta}+\eta\log\frac{m-1}{\eta}<\eps/2.
$$
If $H$ is not Markov, we can use sufficiently small window perturbations of the piecewise affine map $H$ analogous to the ones from the proofs of Propositions
\ref{p:3} and \ref{p:6} to
obtain $\tilde H\in PAM(\lambda)_{slope>1}$ still satisfying
$$\rho(f,\tilde H)<\delta,~h_{\lambda}({\tilde H}) = \int_{0}^1\log\vert \tilde H'(x)\vert~\d\lambda(x)<\eps.
$$

Let $S = S(\tilde H) $ be the set consisting of all orbits of points of discontinuity of the derivative of $\tilde H$ and also both endpoints $0,1$. Since
$\tilde H$ is Markov, $S$ is finite
and there exists a periodic orbit $P$ and its two consecutive points $p,p'\in P$ such that $[p,p']\cap S=\emptyset$, (i.e., $\tilde H\upharpoonright [p,p']$ is
affine) and, $p$ and $p'$ are
so close that using Lemma \ref{l:2}, for every $m\ge 3$ any $m$-fold piecewise affine perturbation (not necessarily regular) of $\tilde H$ on $[p,p']$ is still
from $B(f;\delta)$. Notice that
each such perturbation $\hat H$ is again from $PAM(\lambda)_{slope>1}$ hence by Lemma \ref{l:5} the entropy is given by the integral formula and
\begin{itemize}
\item $h_{\lambda}(\hat H)\in \left (h_{\lambda}(\tilde H),h_{\lambda}(\tilde H)+(\log m)(p'-p)\right]$,
 \item  Lemma \ref{l:5} implies that  the entropy $h_{\lambda}(\hat H)$ is a continuous function of the slopes of
    the affine pieces of $\hat H \upharpoonright [p,p']$ and that
    each value from $\left (h_{\lambda}(\tilde H),h_{\lambda}(\tilde H)+(\log m)(p'-p)\right]$ is the entropy of some  piecewise affine $m$-fold perturbation
    $\hat H$ of $\tilde H$ on
    $[p,p']$
\end{itemize}

To see that for every $c\in (0,\infty)$ the set $PAM(\lambda)_{entr=c}$ is dense in $C(\lambda)$, we proceed as
follows.   As mentioned in the beginning of the proof of Proposition \ref{p:3} $PA(\lambda)$ is dense in $C(\lambda)$. For each $f \in PA(\lambda)$ the slope of
each affine piece is at least
one, using a window perturbation we can make an arbitrarily small perturbation replacing these affine pieces with ones whose slopes are strictly greater than
one, obtaining  that
$PA(\lambda)_{slope > 1}$
is dense in $C(\lambda)$.

Fix $c$ and an $g \in C(\lambda)$,  choose $f \in PA(\lambda)_{slope > 1}$ arbitrarily close to $g$.
  By Equation \eqref{e:25} we can find a ${H} \in PA(\lambda)_{slope > 1} $ arbitrarily close of $g$ with entropy strictly  less than $c$. The above construction
  yields Markov map $\tilde H$
  with small entropy, and
  for large enough $m$ it yields a Markov map $\hat H$  with entropy exactly $c$.
  \end{proof}

\begin{proposition}
The set $C(\lambda)_{entr=+\infty}$ is dense in $C(\lambda)$.
\end{proposition}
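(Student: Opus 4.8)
The plan is to prove that $C(\lambda)_{entr=+\infty}$ is dense by showing that near any given map we can produce maps of arbitrarily large finite metric entropy, and then take a limit. The key tool is the previous proposition, which gives us, for each finite $c > 0$, a dense set of piecewise affine Markov maps with entropy exactly $c$. Since entropy is given by the integral formula $h_\lambda(f) = \int_0^1 \log|f'|\,\d\lambda$ for maps in $PA(\lambda)$ (Lemma \ref{l:5}), I can make the entropy large by introducing many steep affine pieces on small subintervals.

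First I would fix $g \in C(\lambda)$ and $\eps > 0$, and reduce as before to the case $f \in PAM(\lambda)_{slope>1}$ with $\rho(f,g) < \eps/2$, using that this set is dense (established in the preceding proof). Next, as in step III of the previous proposition, I would locate a periodic orbit with two consecutive points $p, p'$ such that $f\upharpoonright [p,p']$ is affine and $[p,p']$ is disjoint from the orbits of all discontinuities of the derivative and the endpoints. By Lemma \ref{l:2}, taking $[p,p']$ short enough, every $m$-fold piecewise affine window perturbation $\hat H$ of $f$ on $[p,p']$ stays in $B(g,\eps)$ and remains in $PAM(\lambda)_{slope>1}$. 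The crucial estimate from step III is that such a perturbation raises the entropy by an amount controlled by $(\log m)(p'-p)$; more precisely one can realize entropy values up to $h_\lambda(f) + (\log m)(p'-p)$. Since $m$ is a free parameter and $\log m \to \infty$, the perturbed entropy $h_\lambda(\hat H)$ can be made arbitrarily large while keeping $\rho(g, \hat H) < \eps$.

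Concretely, I would construct for each $k \in \N$ a map $H_k \in PAM(\lambda)_{slope>1}$ with $\rho(g, H_k) < \eps$ and $h_\lambda(H_k) > k$, by choosing the fold-number $m = m(k)$ large enough. This already shows that maps of arbitrarily large finite entropy are $\eps$-dense. To obtain an element of $C(\lambda)_{entr=+\infty}$ itself within $B(g,\eps)$, the natural approach is to perturb on a sequence of disjoint short intervals simultaneously: on the $k$-th interval introduce an $m(k)$-fold perturbation contributing entropy at least $1$, so that the total integral $\int_0^1 \log|f'|\,\d\lambda$ diverges, while the perturbations are small enough in the uniform metric (by Lemma \ref{l:2}, shrinking interval lengths) that $\rho$ to $g$ stays below $\eps$ and the limiting map remains continuous, $\lambda$-preserving, and leo.

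The main obstacle is ensuring that the infinitely many window perturbations assemble into a genuine limit in $C(\lambda)$ with actually infinite entropy rather than merely arbitrarily large finite entropy. One must verify that $\int_0^1 \log|f'|\,\d\lambda = +\infty$ for the limit map, which requires that the contributions from the successive perturbations add up without cancellation and that the limit is still piecewise affine enough (or at least in $C(\lambda)$) for the entropy formula or a lower semicontinuity argument to apply; since Lemma \ref{l:5} is stated only for $PA(\lambda)$, care is needed because the infinite limit need not be in $PA(\lambda)$. A clean way around this is to note that topological entropy bounds metric entropy from below on each invariant subinterval, or to use that $h_\lambda$ of the limit dominates the finite-truncation entropies, which diverge. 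Controlling the uniform distance throughout via Lemma \ref{l:2}, with summable interval lengths, is the routine part; making the divergence of the entropy rigorous for the limiting map is where the real work lies.
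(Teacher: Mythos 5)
Your overall plan --- reduce to a nearby map in $PAM(\lambda)_{slope>1}$, perform a sequence of ever-finer window perturbations as in step III, and pass to a uniform limit --- is the same skeleton as the paper's proof, and you have correctly located the crux: why does the \emph{limit} map have infinite metric entropy? But the proposal does not close that gap, and both of the fixes you sketch are invalid. The inequality between topological and metric entropy goes the wrong way: by the variational principle $h_\lambda(f)\le h_{top}(f)$, so infinite topological entropy (which is generic anyway, Proposition~\ref{prop:inf}) gives no lower bound on $h_\lambda$. Likewise, ``$h_\lambda$ of the limit dominates the finite-truncation entropies'' is precisely the statement that needs proof: metric entropy with respect to a fixed measure is \emph{not} lower semicontinuous under uniform convergence, so nothing is gained by invoking it. Finally, divergence of $\int_0^1\log\vert H_\infty'\vert\,\d\lambda$ proves nothing by itself: Lemma~\ref{l:5} applies only to maps in $PA(\lambda)$ (finitely many affine pieces), and for a countably-piecewise-affine limit the Rohlin formula requires extra hypotheses (e.g.\ finite entropy of the monotonicity partition) which fail exactly in the regime where you force the integral to diverge; the inequality that survives in general is of Ruelle type, $h_\lambda\le\int\log\vert f'\vert\,\d\lambda$, again the wrong direction.

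The paper closes the gap with two ideas absent from your proposal. First, the perturbation windows are \emph{nested}, $[p_n,p_n']\supset[p_{n+1},p_{n+1}']$, rather than disjoint. Consequently $H_\infty$ agrees with $H_{n+1}$ outside $[p_{n+1},p_{n+1}']$, and on that window it is a uniform limit of maps $\lambda$-equivalent to $H_{n+1}$ there with matching endpoint values; since $\lambda$-equivalence (equality of pushforwards of $\lambda$ restricted to the window) passes to uniform limits, $H_\infty$ is itself a window perturbation of each $H_n$. Second, the entropy of the limit is bounded below not by any semicontinuity claim but by entropy \emph{relative to a fixed finite partition}: with $\A_n$ a Markov partition for $H_n$ one has
\begin{equation*}
h_\lambda(H_\infty)\;\ge\; h_\lambda(H_\infty,\A_n)\;\ge\; h_\lambda(H_n,\A_n)\;>\;n,
\end{equation*}
where the middle inequality holds because all later modifications are $\lambda$-equivalent rearrangements confined to a single window, so they do not degrade the $\A_n$-itinerary statistics. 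This fixed-partition device (or an equivalent substitute) is the missing idea; without it, your limit map could a priori have small finite metric entropy even though every approximant $H_k$ has entropy greater than $k$.
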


\begin{proof} We proceed like in the proof of the previous lemma. We fix $g \in C(\lambda)$, and we repeat steps I and II,
and then in step III  we realize a sequence of
window perturbations:  sequences $(H_n)_{n\ge 1}$, $([p_n,p_n'])_{n\ge 1}$ and $(m_n)_{n\ge 1}$
such that for each $n$,
\begin{itemize}
\item $[p_n,p_n']\supset [p_{n+1},p'_{n+1}]$,
\item $H_{n+1}$ is a $m_n$-fold window perturbation of $H_n$ on $[p_n,p_n']$,
\item $H_{n}\in PAM(\lambda)_{slope>1}$,
\item $h_{\lambda}(H_n)=h_{\lambda}(H_n,\A_n)>n$, where $\A_n$ is a Markov partition for $H_n$,
\item for some $H_{\infty}\in B(g;\delta)$, $\rho(H_n,H_{\infty})\to 0$ for $n\to\infty$,
\item $h_{\lambda}(H_{\infty},\A_n)\ge h_{\lambda}(H_n,\A_n)>n$ hence $h_{\lambda}(H_{\infty})=\infty$.
\end{itemize}
\end{proof}

For completeness we prove the following  fact, which is well known in many situations.

\begin{proposition}\label{prop:inf}
The set $C(\lambda)_{h_{top} = \infty}$ is a dense $G_{\delta}$ subset of $C(\lambda)$.
\end{proposition}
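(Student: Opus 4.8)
The plan is to establish the two halves of the statement separately: that $C(\lambda)_{h_{top}=\infty}$ is dense, which is immediate from the preceding proposition, and that it is a $G_\delta$, which reduces to the lower semicontinuity of topological entropy on $(C(\lambda),\rho)$.

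For density, recall that the preceding proposition gives that $C(\lambda)_{entr=+\infty}$ is dense in $C(\lambda)$. Every $f\in C(\lambda)$ preserves $\lambda$, so $\lambda$ is an $f$-invariant Borel probability measure and the variational principle \cite{Wa82} yields $\htop(f)\ge h_{\lambda}(f)$. Hence $h_{\lambda}(f)=\infty$ forces $\htop(f)=\infty$, i.e.\ $C(\lambda)_{entr=+\infty}\subseteq C(\lambda)_{h_{top}=\infty}$, and the latter set is therefore dense.

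For the $G_\delta$ property I would write
$$C(\lambda)_{h_{top}=\infty}=\bigcap_{k\ge 1}\{f\in C(\lambda)\colon~\htop(f)>k\}$$
and prove that each set on the right is open, i.e.\ that $f\mapsto\htop(f)$ is lower semicontinuous for $\rho$. The engine is the horseshoe description of entropy for interval maps \cite{BlCo92}. First, if $g^n$ admits an $s$-horseshoe, meaning pairwise interior-disjoint closed intervals $J_1<\cdots<J_s$ with $g^n(J_i)\supseteq J_1\cup\cdots\cup J_s$ for all $i$, then $\htop(g)\ge\frac1n\log s$. Second, if $\htop(f)>k$ there are $n\in\N$ and such a horseshoe for $f^n$ with $\frac1n\log s>k$. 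The key observation is that, after discarding the two outermost intervals, the horseshoe becomes robust: writing $[a,b]=[\min J_2,\max J_{s-1}]$, for each $i$ we have $\min_{J_i}f^n\le\min J_1<a$ and $\max_{J_i}f^n\ge\max J_s>b$, with definite gaps. Since $g\mapsto g^n$ is continuous in $\rho$, for all $g$ in a sufficiently small $\rho$-ball around $f$ we still have $\min_{J_i}g^n<a$ and $\max_{J_i}g^n>b$, and the intermediate value theorem gives $g^n(J_i)\supseteq[a,b]\supseteq J_2\cup\cdots\cup J_{s-1}$ for every $i$. Thus each such $g$ inherits a genuine $(s-2)$-horseshoe for $g^n$, whence $\htop(g)\ge\frac1n\log(s-2)$.

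It remains only to choose the horseshoe so that $\frac1n\log(s-2)>k$, and this is the one point that needs care, since a bare covering $f^n(J_i)\supseteq\bigcup_\ell J_\ell$ is not by itself stable under perturbation. Because $\htop(f)=\lim_n\frac1n\log s_n>k$, where $s_n$ is the maximal size of a horseshoe for $f^n$, one has $s_n\to\infty$ and $\frac1n\log s_n>k$ for suitable large $n$; for such $n$ the correction $\frac1n\log(1-2/s_n)\to 0$ is negligible, so $\frac1n\log(s_n-2)>k$ as well, and since $k\ge 1$ this already forces $s_n-2\ge 2$. Discarding the two extreme intervals of an $s_n$-horseshoe then produces, by the previous paragraph, an open $\rho$-neighborhood of $f$ contained in $\{\htop>k\}$. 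This proves that each $\{\htop>k\}$ is open, so $C(\lambda)_{h_{top}=\infty}$ is a $G_\delta$; combined with the density established above, it is a dense $G_\delta$. The main obstacle, as indicated, is precisely passing from an arbitrary horseshoe to a perturbation-stable one while retaining essentially the full entropy, which the trimming of the two outer intervals handles.
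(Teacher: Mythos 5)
Your proof is correct, but it follows a genuinely different route from the paper's. The paper's proof is a direct perturbation construction: every $f\in C(\lambda)\setminus\{\id\}$ has a fixed point where the graph crosses the diagonal transversally, and an $(n+2)$-fold window perturbation near that point yields a map $g$ arbitrarily close to $f$ with a horseshoe of entropy $\log n$ inside the window; since such horseshoes persist under uniform perturbation, each set $\{h_{top}\ge \log n\}$ contains an open dense set, and intersecting over $n$ gives a dense $G_\delta$ contained in $C(\lambda)_{h_{top}=\infty}$. You instead get density from the preceding proposition together with the variational principle $h_{top}(f)\ge h_{\lambda}(f)$, and you get the $G_\delta$ property from lower semicontinuity of topological entropy on interval maps: Misiurewicz's horseshoe theorem (available in \cite{BlCo92}) plus your trimming argument, which converts an $s$-horseshoe for $f^n$ into an $(s-2)$-horseshoe whose covering relations are strict, hence stable for every $g$ with $g^n$ uniformly close to $f^n$. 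The comparison is instructive: your argument proves the literal statement, namely that each superlevel set $\{h_{top}>k\}$ is open so that $C(\lambda)_{h_{top}=\infty}$ is itself a $G_\delta$, whereas the paper's argument as written establishes only that this set \emph{contains} a dense $G_\delta$ (residuality), leaving the $G_\delta$ claim to an implicit appeal to semicontinuity; on the other hand, the paper's proof is self-contained and elementary, needing neither the metric-entropy density proposition, nor the variational principle, nor Misiurewicz's theorem, but only the stability of one explicitly constructed horseshoe. One point you should phrase more carefully: instead of the identity $h_{top}(f)=\lim_n\frac1n\log s_n$, it is safer to invoke the standard form of the horseshoe theorem (for each $\eps>0$ there exist $n$ and an $s$-horseshoe for $f^n$ with $\frac1n\log s>h_{top}(f)-\eps$) and then observe that an $s$-horseshoe for $f^n$ induces an $s^m$-horseshoe for $f^{nm}$, so the horseshoe size can be made arbitrarily large while the exponential rate $\frac1n\log s>k$ is kept fixed; this justifies, exactly as you intend, that the loss from $s$ to $s-2$ is negligible.
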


\begin{proof}
Every map $f \in C(\lambda) \setminus \{id\}$,  has a fixed point $b$ where the graph of $f$ is transverse to the diagonal at $b$.
Using an $(n+2)$-fold window perturbation on a neighborhood of $b$, we can create a map $g \in C(\lambda)$ arbitrarily close to $f$ with a horseshoe with
entropy $\log n$ in the  window.  Since horseshoes are stable under perturbations,  there is an open ball $B(g,\delta)$  such that each $h$ in this ball has
topological entropy at least $\log
n$ for any $n \ge 1$.
\end{proof}

\end{document}